\newtheorem{thm}{Theorem}[section]
\newaliascnt{lemma}{thm}
\newtheorem{lem}[lemma]{Lemma}
\newtheorem{rem}[thm]{Remark}
\newtheorem{exm}{Example}
\numberwithin{equation}{section}
\newcommand{\Id}{\operatorname{Id}}
\newcommand{\CC}{\mathbb{C}}
\newcommand{\HH}{\mathbb{H}}
\newcommand{\R}{\mathbb{R}}
\newcommand{\Z}{\mathbb{Z}}
\newcommand{\kahler}{K\"{a}hler }
\newcommand{\diam}{\operatorname{diam}}
\renewcommand{\sec}{\operatorname{sec}}
\newcommand{\Ric}{\operatorname{Ric}}
\newcommand{\rr}{\sqrt}
\newcommand{\coker}{\operatorname{coker}}
\DeclareMathOperator{\ind}{ind}
\DeclareMathOperator{\tr}{Tr}
\DeclareMathOperator{\Td}{td}
\DeclareMathOperator{\cok}{coker}
\newcommand {\be}{\begin{equation}}
\newcommand {\ee}{\end{equation}}
\begin{document}

\title{Almost Nonnegative Ricci curvature and new vanishing theorems for genera}

\author{Xiaoyang Chen}
\address[Chen]{School of Mathematical Sciences, Institute for Advanced Study, Tongji University, Shanghai, China.}
\email{xychen100@tongji.edu.cn}
\author{Jian Ge}
\address[Ge]{School of Mathematical Sciences, Laboratory of Mathematics and Complex Systems, Beijing Normal University, Beijing 100875, P. R. China.}
\email{jge@bnu.edu.cn}
\author{Fei Han}
\address[Han]{Department of Mathematics, National University of Singapore}
\email{mathanf@nus.edu.sg}
\maketitle

\begin{abstract}
We derive several vanishing theorems for genera under almost nonnegative Ricci curvature and infinite fundamental group, which includes Todd genus, $\widehat{A}$-genus, elliptic genera and Witten genus. A vanishing theorem of Euler characteristic number for almost nonnegatively curved Alexandrov spaces is also proved.
\end{abstract}

\tableofcontents


\section{Introduction}

A classical theorem of Lichnerowicz asserts that a compact spin manifold carrying a Riemannian metric of positive scalar curvature has vanishing $\widehat{A}$-genus. In a different direction, Lott conjectured that a compact spin manifold  with \emph{almost nonnegative sectional curvature} has vanishing $\widehat{A}$-genus \cite{Lot2000}. In this paper we derive several vanishing theorems for various genera under almost nonnegative Ricci curvature condition. We start from the Todd genus.

\begin{thm}\label{nice}
Let $M$ be a compact complex manifold with infinite fundamental group. If $M$ admits a sequence of \kahler metrics with almost nonnegative Ricci curvature, then the Todd genus of $M$ vanishes.
\end{thm}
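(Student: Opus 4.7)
\emph{Proof proposal.} The strategy is to reduce, after passing to a finite cover, to the classical holomorphic Lefschetz vanishing: a compact Kähler manifold admitting a nowhere-vanishing holomorphic vector field has $\chi(\mathcal{O})=0$.

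\emph{Step 1 (Universal cover and almost splitting).} Pull back the sequence $g_i$ to the universal cover $\widetilde M$, obtaining complete \kahler metrics $\widetilde g_i$ with the same Ricci lower bound as $g_i$. After normalizing $\diam(M,g_i)=1$, one has $\Ric(\widetilde g_i)\ge -(n-1)\eps_i$ with $\eps_i\to 0$. Because $\pi_1(M)$ is infinite and $M$ is compact, $\widetilde M$ is noncompact and contains pairs of points at arbitrarily large distance, placing us in the setting of the Cheeger--Colding almost splitting theorem. A subsequence of $(\widetilde M,\widetilde g_i,\tilde p_i)$ Gromov--Hausdorff subconverges to a limit space containing a line, and the limit splits off an $\R$-factor.

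\emph{Step 2 (Kähler upgrade of the splitting).} The key point is that on each $\widetilde g_i$ the complex structure $J$ is parallel. Therefore the almost-parallel gradient field produced by the Cheeger--Colding almost splitting is $J$-paired with a second almost-parallel direction, so the splitting is actually along a complex line $\CC$. Combining this Kähler almost splitting with the structure theorem of Kapovitch--Wilking for fundamental groups under almost nonnegative Ricci curvature produces a finite cover $\widehat M\to M$ that is the total space of a smooth fibration over a flat torus $T^k$ with $k\ge 1$.

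\emph{Step 3 (Holomorphic vector field and vanishing).} Using the $J$-invariance of the almost-parallel splitting directions and their equivariance under the deck group of $\widehat M$, the smooth fibration $\widehat M\to T^k$ is promoted to a \emph{holomorphic} fibration onto a complex torus $T^k_{\CC}$. Pulling back the translation-invariant holomorphic vector fields on $T^k_{\CC}$ yields a nowhere-vanishing holomorphic vector field $X$ on $\widehat M$. The closure of the one-parameter subgroup generated by $X$ in $\operatorname{Aut}(\widehat M)$ contains an $S^1$ acting holomorphically without fixed points, so the Atiyah--Bott holomorphic Lefschetz fixed point formula gives $\Td(\widehat M)=\chi(\widehat M,\mathcal O)=0$. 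Since $\Td(\widehat M)=\deg(\widehat M\to M)\cdot \Td(M)$ by Hirzebruch--Riemann--Roch, we conclude $\Td(M)=0$.

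\emph{Main obstacle.} The difficulty is concentrated in Step 3: the Cheeger--Colding/Kapovitch--Wilking analysis is intrinsically Riemannian and only delivers a smooth fibration over a flat torus. To upgrade this to a holomorphic fibration over a complex torus one must track the complex structure $J$ carefully through the Gromov--Hausdorff collapse, exploit the $J$-invariance of the almost-parallel gradient directions uniformly in $i$, and verify that the resulting limiting structure on the base and the induced structure group of the fibration are holomorphic. This is where the \kahler hypothesis is used essentially, and is the step for which a careful technical argument must be given.
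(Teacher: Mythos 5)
Your proposal has a genuine gap, and it is located earlier than where you place it: Step 2, not Step 3, is the step that fails. Kapovitch--Wilking \cite{KW2011} gives only a structure result on the fundamental group (a nilpotent subgroup of finite index); it does not produce a fibration of any finite cover over a torus. The fibration theorems of that type require \emph{almost nonnegative sectional curvature} (Fukaya--Yamaguchi, Kapovitch--Petrunin--Tuschmann), and the paper points out explicitly that no such fibration theorem can hold under almost nonnegative Ricci curvature because of Anderson's counterexample \cite{And1992}: there are manifolds with almost nonnegative Ricci curvature and infinite fundamental group with nonzero Euler number, which could not fiber smoothly over a positive-dimensional torus. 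The Cheeger--Colding almost splitting (and its K\"ahler refinement to a $\CC$-factor) concerns the Gromov--Hausdorff \emph{limit} of the universal covers, which may be singular and collapsed; it does not transfer to a smooth, let alone holomorphic, fibration structure on $M$ itself. The only results in this direction in the K\"ahler case (Fan, Zhang, cited in the paper) require in addition a uniform bound on sectional (or bisectional) curvature, an assumption \autoref{nice} deliberately avoids. Even granting a holomorphic submersion onto a complex torus, your Step 3 needs a further unproved lifting: a translation-invariant holomorphic vector field on the base does not pull back to a holomorphic vector field on the total space without a holomorphic connection or bundle structure, so the nowhere-vanishing field $X$ (and hence the Atiyah--Bott/Bott vanishing) is not obtained.

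The paper's proof takes a completely different, index-theoretic route that never produces any geometric structure on $M$: it passes to a finite cover $M_1$ with nilpotent (hence residually finite) fundamental group, builds a tower of finite covers $M_j$ with $[G_1:G_j]\to\infty$, realizes the Todd genus as the index of $D_i=\partial+\partial^*$ on $\oplus_k\wedge^{2k,0}$, and bounds $|\ind(\widehat{D_i}^j)|$ uniformly in $i,j$ by a Bochner-formula argument (the K\"ahler hypothesis enters only to control the curvature term by the Ricci lower bound) combined with B\'erard's sup-norm estimate and Ivanov's diameter bound for covers. Multiplicativity of the Todd genus under finite coverings then forces $[G_1:G_j]\,|\ind(\widehat{D_i}^1)|\leq C(n)$ for all $j$, hence the Todd genus vanishes. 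If you want to salvage your approach, you would essentially have to prove a K\"ahler fibration theorem under almost nonnegative Ricci curvature without bounded curvature, which is an open problem and strictly harder than the theorem you are trying to prove.
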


If a sequence of Riemannian metrics $\{g_i\}_{n\in \mathbb N}$ on a smooth manifold $M$ satisfies
\[
\Ric (g_i) \geq -1\quad{\text{and}}\quad \diam(g_i) \leq \frac{1}{i},
\]
for all $i\in\mathbb N$, then we say that $\{g_i\}_{n\in \mathbb N}$ have almost nonnegative Ricci curvature. Here $\Ric (g_i)$ and $\diam (g_i)$ stand for the Ricci curvature and diameter of $g_i$, respectively. If $g$ has nonnegative Ricci curvature, it is clear that $g_i =\epsilon_i g, \epsilon_i=\frac{1}{i^2 \diam (g)^2}$ have almost nonnegative Ricci curvature.

Given a compact complex manifold $M^n$ of complex dimension $n$, its Todd genus (or holomorphic Euler number) is defined to be
\[
\sum_{p=0}^n (-1)^p \dim H^{0, p} (M, \mathbb{C}),
\]
where $H^{0, p} (M, \mathbb{C})$ is the Dolbeault cohomology group of $M$. We emphasize that it is necessary to assume that $M$ has infinite fundamental group in \autoref{nice}. To see this, note that the Todd genus of complex projective space is $1$.

\autoref{nice} can be viewed as a complex analogue of a theorem of Fukaya-Yamaguchi \cite{FY1992}, which says that a compact Riemannian manifold with \emph{almost nonnegative sectional curvature} and infinite fundamental group has vanishing topological Euler number. However, a compact Riemannian manifold with almost nonnegative Ricci curvature and infinite fundamental group may have nonzero topological Euler number, see the example constructed by Anderson in \cite{And1992}.

\begin{exm}
Let $X^{n+1}$ be a smooth abelian variety of complex dimension $(n+1)\geq 3$ embedded in some complex projective space $\mathbb{CP}^N$. Let $M^n$ be the intersection of $X$ with some generic $\mathbb{CP}^{N-1}$. Then $M^n$ is a smooth hypersurface of $X$ with positive holomorphic normal bundle $L$. By Lefschetz hyperplane theorem, the fundamental group of $M$ is an infinite abelian group. Moreover, the Todd genus of $M$ is nonzero. In fact, by Hirzebruch-Riemann-Roch theorem \cite{Bal2006, Hir1995}, the Todd genus of $M$ is computed by $\int_{M} \Td(T M)$, where $\Td(T M)$ is the Todd class of the tangent bundle of $M$. Let $d$ be the first Chern class of $L$. As $X$ is an abelian variety, then $\Td(T X)=1$. Moreover, by properties of Todd class \cite{Hir1995}, we have
\[
\Td(T M) \Td(L)=\Td(T X)|_{M},
\]
\[
\Td(L)=\frac{d}{1-e^{-d}},\ d=c_1(L).
\]
Then
\[
\Td(T M)=\frac{1-e^{-d}}{d}.
\]
As $L$ is a positive line bundle, we see that
\[
\int_{M} \Td(T M)=(-1)^n \int_{M} \frac{d^n} {(n+1)!} \neq 0.
\]
Hence the Todd genus of $M$ is nonzero. It follows that $M \times \mathbb{CP}^k$ has nonzero Todd genus for any $k \geq 1$. Therefore by \autoref{nice}, $M \times \mathbb{CP}^k$ does not admit a sequence of \kahler metrics with almost nonnegative Ricci curvature. For $k \geq 2$, it seems that all previous known obstructions to almost nonnegative Ricci curvature do not apply to $M \times \mathbb{CP}^k$.
\end{exm}

Riemannian manifolds with almost nonnegative Ricci curvature have been studied extensively \cite{Ber1988, Fan2002, Gro1981, KL2019, KPT2010, KW2011, Zha2007}.
We briefly recall some previously known results (here $m=\dim  M$).
\begin{itemize}
\item The fundamental group of $M$ has a nilpotent subgroup of finite index \cite{KW2011}.
\item The first Betti number of $M$ is bounded above by $m$ \cite{Gro1981, Gal1983} with equality being achieved if and only if $M$ is diffeomorphic to a torus \cite{CC1997, Col1997}.
\item If $M$ is spin and $m$ is divisible by $4$, then its $\widehat{A}$-genus is bounded from above by $2^{\frac{m}{2}}$ \cite{Gro1981, Gal1983}.
\end{itemize}

Compact \kahler manifolds with almost nonnegative Ricci curvature have been studied in \cite{Fan2002, Zha2007}. Under the additional assumptions that $M$ has bounded sectional curvature and other things, it was shown in \cite{Zha2007} that $M$ has finite fundamental group. Moreover, $M^n$ is diffeomorphic to a complex manifold $X$ such that the universal covering of $X$ has a decomposition: $ \widetilde{X}= X_1 \times \cdots \times X_s$, where $X_i$ is a Calabi-Yau manifold, or a hyper\kahler manifold, or $X_i$  satisfies $H^{p,0}(X_i ,\mathbb{C}) = 0, p > 0.$ If $M$ has infinite fundamental group, under the stronger assumption that $M$ has almost nonnegative bisectional curvature and bounded sectional curvature, it was shown in \cite{Fan2002} that there is a holomorphic fibration $ M \rightarrow J(M)$, where $J(M)$ is the Jacobian of $M$, a complex torus of dimension $\frac{1}{2}b_1(M)$ and $b_1$ is the first Betti number of $M$.

Compared with the works in \cite{Fan2002, Zha2007}, we do \emph{not} assume boundedness of sectional curvature in \autoref{nice}.

Our method can also be used to prove the following vanishing result for $\widehat{A}$-genus.
\begin{thm} \label{spin}
Let $M$ be a $4m$-dimensional compact spin manifold with infinite fundamental group. If $M$ admits a sequence of Riemannian metrics with almost nonnegative Ricci curvature, then its $\widehat{A}$-genus vanishes.
\end{thm}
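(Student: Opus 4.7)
\medskip
\noindent\textbf{Proof plan.} The plan is to combine three ingredients: the Kapovitch--Wilking virtual nilpotence of $\pi_1(M)$ under almost nonnegative Ricci curvature, the multiplicativity of the $\widehat{A}$-genus under finite covers, and the Gromov--Gallot uniform bound $|\widehat{A}(M)|\leq 2^{m/2}$ on spin manifolds of dimension $m$ that admit a sequence of metrics with almost nonnegative Ricci curvature, recalled in the introduction. The idea is to exploit the infinite fundamental group to build finite covers of arbitrarily large degree, apply the same uniform bound to each cover, and then deduce vanishing by divisibility.

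First I would invoke the Kapovitch--Wilking theorem to obtain a finite-index nilpotent subgroup $N\leq \pi_1(M)$. Since $\pi_1(M)$ is infinite, so is $N$; being finitely generated nilpotent, $N$ is residually finite, and residual finiteness passes from a finite-index subgroup to the ambient group $\pi_1(M)$. An infinite, finitely generated, residually finite group has finite-index subgroups of arbitrarily large index, for otherwise it would embed in a finite product of finite groups. For each $k\in \mathbb N$ I pick a corresponding finite cover $\pi_k\colon M_k\to M$ with $\deg(\pi_k)\geq k$. Then $M_k$ is spin of dimension $4m$, and the pullbacks $\widetilde g_i^{(k)}=\pi_k^*g_i$ satisfy $\Ric(\widetilde g_i^{(k)})\geq -1$ (Ricci is a pointwise invariant) and $\diam(\widetilde g_i^{(k)})\to 0$ as $i\to\infty$ for each fixed $k$, so every $M_k$ itself admits a sequence of metrics with almost nonnegative Ricci curvature.

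Next I would apply the Gromov--Gallot bound to each $M_k$ to obtain $|\widehat{A}(M_k)|\leq 2^{2m}$, with a constant independent of $k$. Multiplicativity of the $\widehat{A}$-genus under finite covers, which follows from $TM_k=\pi_k^*TM$, gives $\widehat{A}(M_k)=\deg(\pi_k)\cdot \widehat{A}(M)$. Combining these yields $\deg(\pi_k)\cdot |\widehat{A}(M)|\leq 2^{2m}$ for every $k$, and letting $k\to\infty$ forces $\widehat{A}(M)=0$. The one step I expect to require the most care is the production of arbitrarily large finite covers, through the chain \emph{virtually nilpotent} $\Rightarrow$ \emph{residually finite} $\Rightarrow$ \emph{subgroups of arbitrarily large finite index}; once these group-theoretic inputs are in place, the rest of the argument is essentially formal, with no hard analysis beyond what is already packaged into the cited Gromov--Gallot estimate.
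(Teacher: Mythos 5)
Your proposal is correct and follows essentially the same route as the paper: Kapovitch--Wilking virtual nilpotence, residual finiteness of finitely generated nilpotent groups to produce finite covers of arbitrarily large degree, a uniform bound on the $\widehat{A}$-genus of those covers under almost nonnegative Ricci curvature (which the paper derives via B\'erard's Bochner-type estimates but explicitly acknowledges was already proved by Gromov and Gallot), and multiplicativity of $\widehat{A}$ under finite coverings. The only step you assert without justification --- that for a fixed degree-$N$ cover the pulled-back metrics still have diameter tending to zero --- is exactly the paper's diameter-of-covers lemma $\diam(\widehat{g})\leq N\,\diam(g)$ due to Ivanov, so it is true but should be proved or cited rather than taken as obvious.
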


Lott \cite{Lot2000} conjectured that a compact spin manifold $M$ with \emph{almost nonnegative sectional curvature} has vanishing $\widehat{A}$-genus. \autoref{spin} weakens the assumption from almost nonnegative sectional curvature to almost nonnegative Ricci curvature under the additional assumption that $M$ has infinite fundamental group. We emphasize that this additional assumption is necessary. In fact, the $K3$ surface is a simply connected spin manifold which admits a Ricci flat metric. However, its $\widehat{A}$-genus is nonzero.

As an application of \autoref{spin}, we give the following example, where all  previous known obstructions to almost nonnegative Ricci curvature do not apply.
\begin{exm}
Let $B^8$ be a Bott manifold, which is spin, simply connected and $\widehat A(B^8)=1$ (c.f. Section 4 in \cite{RS1995}). Let $M^8=(\mathbb{T}^2 \times S^6) \sharp B^8$. Then $M^8$ is a spin manifold with infinite fundamental group and
\[
\widehat{A}(M^8)=\widehat{A}(\mathbb{T}^2 \times S^6)+\widehat{A}(B^8)=1.
\]
By \autoref{spin}, $M^8$ can not admit a sequence of Riemannian metrics with almost nonnegative Ricci curvature.
\end{exm}

The elliptic genera and Witten genus are $q$-deformations of the $\widehat{A}$-genus and $\widehat{L}$-genus. They are partition functions of two-dimensional quantum field theories and formally the indices of Dirac operator in free loop spaces.  If the formal Chern roots of the complexified tangent bundle $TM\otimes \CC$ of a $4m$-dimensional compact oriented manifold $M$ are $\{\pm 2\pi \rr{-1} x_j, 1\leq j
\leq 2m\}$, then the Hirzebruch $\widehat{A}$-genus and $\widehat{L}$-genus are the characteristic number of $M$ defined by
\[
\widehat{A}(M)=\left\langle \prod_{j=1}^{2m} \frac{\pi\rr{-1}x_j}{\sinh(\pi\rr{-1}x_j)}, [M] \right\rangle,
\]
\[
\widehat{L}(M)=\left\langle \prod_{j=1}^{2m} \frac{2\pi \rr{-1}x_j}{\tanh(\pi\rr{-1}x_j)}, [M] \right\rangle,
\]
while the elliptic genera $Ell_1(M), Ell_2(M)$ and the Witten genus $W(M)$ are defined
by
\begin{equation}
Ell_1(M)=\left\langle \left( \prod_{j=1}^{2m}x_{j}\frac{\theta ^{\prime }(0,\tau
)}{\theta (x_{j},\tau )}\frac{\theta_1(x_j,\tau
)}{\theta_1(0,\tau )}\right)\!, [M]\right\rangle=\widehat{L}(M)+O(q),
\end{equation}
\begin{equation}
Ell_2(M)=\left\langle \left( \prod_{j=1}^{2m}x_{j}\frac{\theta ^{\prime }(0,\tau
)}{\theta (x_{j},\tau )}\frac{\theta_2(x_j,\tau
)}{\theta_2(0,\tau )}\right)\!, [M]\right\rangle=\widehat{A}(M)+O(q^{1/2}),
\end{equation}
\begin{equation}
W(M)=\left\langle \left( \prod_{j=1}^{2m}x_{j}\frac{\theta ^{\prime }(0,\tau
)}{\theta (x_{j},\tau )}\right)\!, [M]\right\rangle=\widehat{A}(M)+O(q^{1/2}),
\end{equation}
where $\theta, \theta_1, \theta_2$ are the classical Jacobi theta functions.
The theory of elliptic genera and Witten genus lead to the profound theory of elliptic cohomology \cite{Hop}. See \autoref{EWgenus} for some basic facts on the elliptic genera and Witten genus.

We can also prove a vanishing theorem for elliptic genera and Witten genus. In the following, $R_{g_i}$ and $V_i$ stand for the Riemannian curvature tensor  and volume of $g_i$ respectively.
\begin{thm} \label{string}
Let $M$ be a $4m$-dimensional compact manifold with infinite fundamental group. If for some constants $p>2m$ and $\Lambda>0$, there exists a sequence of Riemannian metrics $\{g_i\}_{i\in\mathbb N}$ on $M$ such that the following holds for all $i$,
\[
\Ric (g_i) \geq -1,\quad \diam (g_i) \leq \frac{1}{i},
\]
\[
\frac{1}{V_i}\int_{M} |R_{g_i}|^p dV_i \leq \Lambda,
\]
then
\begin{enumerate}[(i)]
\item $M$ has vanishing elliptic genera if $M$ is spin. In particular, $M$ has vanishing signature.
\item $M$ has vanishing Witten genus if $M$ is further string (i.e. spin and the spin class $\frac{p_1(M)}{2}=0$).
\end{enumerate}
\end{thm}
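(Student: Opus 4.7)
My plan is to combine the virtual nilpotency of $\pi_1(M)$ from \cite{KW2011} with an $L^2$-index argument on an infinite cyclic cover. Writing the elliptic genera and the Witten genus as generating series of twisted Dirac indices, each coefficient should be shown to vanish via a twisted Lichnerowicz-Weitzenb\"ock identity, with the $L^p$-curvature hypothesis supplying the required control on the twist endomorphism.

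First I would reduce to a convenient cover. By \cite{KW2011} the group $\pi_1(M)$ has a nilpotent subgroup of finite index; being infinite, after passing to a finite normal cover (which preserves the Ricci, diameter and $L^p$ curvature hypotheses and multiplies each of the genera in question by a positive integer) I may assume $\pi_1(M)$ is torsion-free nilpotent, so in particular $b_1(M)\geq 1$. Fix a surjection $\pi_1(M)\twoheadrightarrow \Z$ and let $\hat M\to M$ be the associated infinite cyclic cover; its deck group is amenable.

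Next I would expand the genera in $q$ in the standard fashion,
\[
Ell_i(M) = \sum_{n\geq 0} q^{n/2}\operatorname{ind}(D\otimes V_n^{(i)}), \qquad W(M) = \sum_{n\geq 0} q^n \operatorname{ind}(D\otimes W_n),
\]
where each $V_n^{(i)}$, $W_n$ is an explicit polynomial combination of $\Lambda^*(TM\otimes\CC)$ and $S^*(TM\otimes\CC)$. By Atiyah's $L^2$-index theorem for amenable coverings, each such ordinary index equals the $\Gamma$-dimension $L^2$-index of the lifted twisted Dirac operator on $\hat M$, so it is enough to show that every $L^2$-kernel of $(\hat D\otimes\hat V)^\pm$ is trivial. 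To this end I would apply the twisted Lichnerowicz-Weitzenb\"ock identity
\[
(D\otimes V)^2 = \nabla^*\nabla + \tfrac{R}{4} + \mathcal{R}_V,
\]
and estimate $\mathcal{R}_V$: the threshold $p>2m=\tfrac{\dim M}{2}$ is precisely the Moser-Sobolev exponent in dimension $4m$ needed to promote the $L^p$-bound on $|R_{g_i}|$ to an $L^\infty$-bound on $\mathcal{R}_V$ after rescaling to unit diameter. The almost nonnegative Ricci bound yields almost nonnegative $R$ in the rescaled sequence; in part (ii) the string condition cancels the leading $\tfrac{p_1}{2}$-contribution to $\mathcal{R}_{W_n}$, leaving a genuinely small twist curvature in the collapsed limit. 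Brooks' theorem then transfers the resulting spectral gap of $(D\otimes V)^2$ from $M$ to $\hat M$, so the $L^2$-kernel vanishes for every coefficient bundle and the $q$-series collapse to zero.

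The main obstacle will be executing the analytic step uniformly across all coefficient bundles $V_n^{(i)}$ and $W_n$ appearing in the $q$-expansion: the operator norm of $\mathcal{R}_{V_n}$ grows polynomially in $n$, and this growth must be balanced against both the Moser-Sobolev iteration driven by $p>2m$ and the mere almost-nonnegativity (rather than positivity) of scalar curvature, so that, after recombining the estimates with the $q^n$-weights, every coefficient $\operatorname{ind}(D\otimes V_n^{(i)})$ and $\operatorname{ind}(D\otimes W_n)$ is seen to vanish. The vanishing of the signature follows from the degeneration $Ell_1(M)=\widehat L(M)+O(q)$ at $q=0$.
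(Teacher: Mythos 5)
There is a genuine gap, and it sits at the heart of your analytic step. You propose to kill each twisted index by showing the $L^2$-kernel of $\hat D\otimes \hat V$ on an infinite cyclic cover vanishes via a spectral gap for $(D\otimes V)^2$. No such gap can follow from the hypotheses: the rescaled metrics only have \emph{almost} nonnegative Ricci (hence scalar) curvature, and the twist curvature $\mathcal{R}_V$ is controlled only in $L^p$ by an arbitrary constant $\Lambda$ --- an $L^p$ bound on $|R_{g_i}|$ cannot be promoted to a small (or even finite, uniformly) $L^\infty$ bound on $\mathcal{R}_V$; Moser iteration applies to subsolutions such as $|s|$, not to the curvature itself. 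Moreover, if a spectral gap for $(D\otimes V)^2$ did hold on the compact manifold $M$, the index would vanish there with no fundamental-group hypothesis at all, which is impossible: the hypotheses of the theorem are satisfied (apart from infinite $\pi_1$) by simply connected examples with nonzero indices (K3, $\HH P^2$), so any correct proof must use infinite $\pi_1$ in a way that your argument does not. The auxiliary claims compounding this are also off: Atiyah's $L^2$-index theorem requires no amenability and only gives equality of the $L^2$-index with the ordinary index (useless unless you can kill kernel \emph{and} cokernel); Brooks' theorem concerns the bottom of the spectrum of the Laplacian on functions and amenable covers, it does not transfer a twisted Dirac gap; and the string condition is a topological condition used for modularity of $W(M)$ --- it does not cancel any pointwise curvature contribution in the Weitzenb\"ock remainder.

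You also flag, but do not resolve, the unbounded growth of $\mathcal{R}_{V_n}$ across the $q$-expansion; this is precisely where the paper's two missing ideas enter. First, modularity (Lemmas \ref{mo} and \ref{mod0}) shows $Ell_2(M)$ and, in the string case, $W(M)$ are determined by \emph{finitely many} twisted indices $\ind(D\otimes B_k)$, resp. $\ind(D\otimes W_j)$, whose twist bundles involve tensor powers of $TM$ of bounded degree, so only finitely many Weitzenb\"ock remainders, each bounded below by $-C(m)|R_{g_i}|$, ever need to be controlled; $Ell_1$ and the signature are then obtained from the modular relation $Ell_1(M,-1/\tau)=(2\tau)^{2m}Ell_2(M,\tau)$, not by expanding $Ell_1$ directly. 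Second, the paper's use of infinite $\pi_1$ is entirely different from yours: residual finiteness of the nilpotent subgroup (Lemma \ref{re}) gives a tower of finite covers with $[G_1:G_j]\to\infty$; Ivanov's lemma (Lemma \ref{iv}) controls their diameters; the B\'erard--Moser estimate (Lemmas \ref{peter}, \ref{sb1}, where $p>2m$ enters) bounds $\dim\ker$ and $\dim\coker$ of the lifted operators, hence the indices, by a constant independent of $i$ and $j$ after letting $i\to\infty$ for fixed $j$; and multiplicativity of the index under finite covers then forces each index, hence the genus, to vanish. Your proposal contains no substitute for this dimension-bound-plus-multiplicativity mechanism, and without it the argument does not close.
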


The integral curvature assumption in \autoref{string} is technical. We do not know if it is indeed necessary.

\begin{exm} (i) Let $N^8=(\mathbb{T}^2 \times S^6) \sharp \HH P^2$, where $\HH P^2$ is the quarternionic projective plane. Then $N^8$ is a spin manifold with infinite fundamental group and the signature
\[
\sigma(N^8)=\sigma(\mathbb{T}^2 \times S^6)+\sigma(\HH P^2)=1\neq 0.
\]
So $N^{8}$ can not admit a sequence of Riemannian metrics with almost nonnegative Ricci curvature as well as the additional integral curvature bound in \autoref{string}. Note that the $\widehat{A}$-genus of $N^8$ is 0.

(ii) Let $W^{24}$ be a $24$-dimensional simply connected string manifold with nonzero Witten genus and vanishing $\widehat{A}$-genus. Let $M^{24}=(\mathbb{T}^2 \times S^{22}) \sharp  W^{24}$. Then the Witten genus
\[
\phi_W(M^{24})=\phi_W(\mathbb{T}^2 \times S^{22})+\phi_W(W^{24})=\phi_W(W^{24})\neq 0.
\]
So $M^{24}$  can not admit a sequence of Riemannian metrics with almost nonnegative Ricci curvature and the additional integral curvature bound in \autoref{string}.
Note that $M^{24}$ admits a Riemannian metric with positive scalar curvature.
\end{exm}

Some special cases of \autoref{nice}, \autoref{spin} and \autoref{string} are easy to prove: If $M$ has infinite fundamental group and admits a Riemannian metric
with nonnegative Ricci curvature, then by the Splitting Theorem of \cite{CG}, some finite cover of $M$ is diffeomorphic to a product manifold $\mathbb{T}^k \times N$ for some $k \geq 1$. It follows that the genera of $M$ appearing in \autoref{nice}, \autoref{spin} and \autoref{string} are zero.

On the other hand, based on a fibration theorem proved in \cite{KPT2010}, a vanishing theorem on signature is proved in \cite{BLO} for Riemannian manifolds with \emph{almost nonnegative sectional curvature} and infinite fundamental group. However, no such fibration theorem exists under almost nonnegative Ricci curvature condition, by the counterexample constructed in \cite{And1992}. In order to prove our vanishing results on genera, we employ an analytic method based on the Bochner technique. Via the Atiyah-Singer index theorem, our strategy is to show that the indices of certain Dirac operators are zero. Using the work of B\'{e}rard \cite{Ber1988}, we can derive a bound of the indices of those Dirac operators. In the presence of infinite fundamental group, our crucial contribution is to show that the indices are in fact zero. In this step the multiplicity property of genera under a finite covering is used in an essential  way. See \autoref{sec:2} for the details of proof.

In \cite{CH2021}, we also obtained several vanishing theorems for genera under different curvature assumptions. We emphasize that the methods used in the proof in \cite{CH2021} and this paper are different.

{\bf Acknowledgements} Xiaoyang Chen is partially supported by National Natural Science Foundation of China No.11701427. He thanks Professor Botong Wang for helpful discussions. Fei Han is partially supported by the grant AcRF A-8000451-00-00 from National University of Singapore. Jian Ge is partially supported by National Key R\&D Program of China grant 2020YFA0712800, NSFC 11731001.

\section{Proof of \autoref{nice}, \autoref{spin} and \autoref{string}}\label{sec:2}
In this section we prove \autoref{nice}, \autoref{spin} and \autoref{string}.

First we prove \autoref{nice}. Let $M$ be a compact complex manifold of complex dimension $n$ with infinite fundamental group. We are going to show that the Todd genus of $M$ vanishes if $M$ admits a sequence of \kahler metrics $g_i$ satisfying
\[
\Ric (g_i) \geq -1,\quad \diam(g_i) \leq \frac{1}{i}.
\]
Then the fundamental group of $M$ contains a nilpotent subgroup of finite index \cite{KW2011}. Let $M_1$ be a finite cover of $M$ such that $\pi_1 (M_1)$ is nilpotent. Then we have the following algebraic lemma where the assumption that $M$ has infinite fundamental group will be used.

\begin{lem} \label{re}
There exists a sequence of normal subgroups of finite index
\[
\pi_1(M_1)=G_1 \supset G_2 \supset G_3 \supset \cdots \supset G_j\supset \cdots
\]
such that $\lim_{j \rightarrow +\infty} [G_1: G_j]  =\infty$.
\end{lem}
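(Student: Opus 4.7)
The plan is to reduce \autoref{re} to a standard algebraic fact: every finitely generated infinite nilpotent group admits a surjective homomorphism onto $\mathbb{Z}$. Once such a surjection $\phi\colon \pi_1(M_1)\twoheadrightarrow \mathbb{Z}$ is in hand, the tower is constructed by pulling back. Explicitly, I would set $G_1=\pi_1(M_1)$ and $G_j=\phi^{-1}(j!\,\mathbb{Z})$ for $j\geq 2$. Since $\mathbb{Z}$ is abelian the subgroup $j!\,\mathbb{Z}$ is normal, so each $G_j$ is normal in $G_1$ of index $j!$; the inclusion $j!\,\mathbb{Z}\subset (j-1)!\,\mathbb{Z}$ gives the descending chain, and $[G_1:G_j]=j!\to\infty$.

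Before producing $\phi$, I would record the preliminary observations that $\pi_1(M_1)$ is finitely generated, because $M_1$ is a compact manifold, and infinite, because it has finite index in the infinite group $\pi_1(M)$. Thus the only real content is the construction of the surjection onto $\mathbb{Z}$.

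For this I would show that the abelianization of $\pi_1(M_1)$ is already an infinite finitely generated abelian group, so that $\phi$ arises as the composition of abelianization with projection onto any $\mathbb{Z}$ summand. The key algebraic input is the following claim: if a finitely generated nilpotent group $G$ has finite abelianization, then $G$ is itself finite. This follows by induction along the lower central series, using that the commutator map induces a surjection
\[
G/[G,G]\otimes \gamma_i(G)/\gamma_{i+1}(G)\twoheadrightarrow \gamma_{i+1}(G)/\gamma_{i+2}(G),
\]
so if $G/[G,G]$ is finite then each successive quotient $\gamma_i(G)/\gamma_{i+1}(G)$ is finite, and nilpotency makes the chain terminate, forcing $G$ to be finite. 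Applied to $\pi_1(M_1)$, this contradicts infiniteness unless the abelianization is already infinite.

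I do not foresee a serious obstacle. The statement is essentially a translation of ``finitely generated infinite nilpotent groups possess a cofinal tower of finite-index normal subgroups,'' and the geometric content has already been absorbed into the virtual nilpotency of $\pi_1(M)$ coming from the Kapovitch--Wilking theorem; the only nontrivial piece is the elementary lower-central-series argument above.
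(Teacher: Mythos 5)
Your proof is correct, and it takes a genuinely different route from the paper. The paper does not construct the tower from a surjection onto $\mathbb{Z}$: it first quotes Hirsch's theorem that a finitely generated nilpotent group is residually finite, defines $G_j$ as the intersection of all normal subgroups of index $\leq j$ (finitely many, by Hall's theorem, so $[G_1:G_j]<\infty$), and then argues by contradiction that if the indices stayed bounded the chain would stabilize, forcing $\bigcap_j G_j=G_{j_0}=\{1\}$ and hence $\pi_1(M_1)$ finite. Your argument instead extracts the geometric-free algebraic fact that a finitely generated infinite nilpotent group has infinite abelianization (your lower-central-series tensor argument is the standard and correct proof), produces $\phi\colon\pi_1(M_1)\to\mathbb{Z}$ surjective, and takes $G_j=\phi^{-1}(j!\,\mathbb{Z})$; this is more explicit (the indices are computed exactly, and residual finiteness is not needed) and proves the lemma as stated. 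One point worth noting: the paper's construction has the additional feature $\bigcap_j G_j=\{1\}$, which is not recorded in the statement of \autoref{re} but is implicitly invoked later when the authors feed the tower into L\"uck's approximation theorem (\autoref{lem2}) to prove \autoref{L2qr}; your tower intersects in $\ker\phi$, which is generally nontrivial, so it would serve the stated lemma and the genus arguments but not that later application without modification.
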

\begin{proof}
If $\pi_1(M_1)$ is free abelian, i.e. $\pi_1(M_1)\simeq \mathbb{Z}^k$ for some $k\geq 1$, we can simply take $G_j=(2^{j-1} \mathbb{Z})^k, j \geq 1$. In general, we will apply a theorem of Hirsch \cite{H}. Since $\pi_1(M_1)$ is a finitely generated nilpotent group, by Hirsch's theorem \cite{H}, it is residually finite, i.e. the intersection of all normal subgroups of finite index is the unit element. Let $G_j$ be the intersection of all normal subgroups of $\pi_1(M_1)$ with index $\leq j$, $j=1,2,\ldots.$ By a theorem of Hall \cite{Ha}, in every finitely generated group, the number of subgroups of given finite index is finite. Then we see that $G_j$ is a normal subgroup of $\pi_1(M_1)$ with finite index. From the definition of $G_j$, we have
\[
\pi_1(M_1)=G_1 \supset G_2 \supset G_3 \supset \cdots \supset G_j\supset \cdots,
\]
\[
\bigcap_j G_j=\{{1}\}.
\]
Moreover, we have
\[
\lim_{ j \rightarrow +\infty}  [G_1: G_j]  =\infty.
\]
Otherwise, there exists some constant $C>0$ such that $[G_1: G_j] \leq C$ for all $j$. As $[G_1: G_j]$ is a monotone increasing sequence taking values in $\mathbb{Z}$, there must exist some $j_0$ such that $[G_1: G_j]=[G_1: G_{j_0}]$ for all $j \geq j_0$. It turns out that $G_j=G_{j_0}$ for each $j \geq j_0$. Hence $\bigcap_j G_j=G_{j_0}.$ Since $\bigcap_j G_j=\{1\}$, then $G_{j_0}=\{1\}.$ As $[G_1: G_{j_0}] < \infty$, it follows that $\pi_1(M_1)=G_1$ is a finite group, which contradicts that $M$ has infinite fundamental group.
\end{proof}

The Dolbeault cohomology group of a compact  \kahler manifold $M$ satisfy $\dim H^{0,p}(M, \mathbb{C})=\dim H^{p,0}(M, \mathbb{C})$ for any $p$.
Let
\[
D_i={\partial} + {\partial}^{*}: \oplus_k \wedge^{2k,0}(M) \rightarrow \oplus_k \wedge^{2k+1,0}(M)
\]
be the first order elliptic operators on $M$ determined by $g_i$. Then the Todd genus of $M$ is equal to the index of $D_i$ for any $i$ by Hodge theory. Denote $\widehat{g_i}^j, \widehat{D_i}^j $ be the pulling backs of $g_i, D_i$ to $M_j$, where $M_j$ is the finite covering of $M_1$ with $\pi_1(M_j)=G_j$. Then we can compare the diameter of $\widehat{g_i^j}$ with that of $\widehat{g_i^1}$:
\begin{lem}[Ivanov \cite{Iva2010}] \label{iv}
\[
\diam (\widehat{g_i^j}) \leq  \ [G_1: G_j] \ \diam (\widehat{g_i^1}).
\]
\end{lem}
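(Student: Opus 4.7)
The assertion is a general metric inequality for finite normal Riemannian coverings: if $\pi: \tilde N \to N$ is an $n$-fold covering of compact connected Riemannian manifolds which is a local isometry, then $\diam(\tilde N) \le n \cdot \diam(N)$. In our setting this applies with $\pi: M_j \to M_1$, an $n$-fold normal covering of degree $n := [G_1:G_j]$, because $\widehat{g_i^j}$ is by construction the pullback of $\widehat{g_i^1}$.

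I would proceed by induction on $n$. The deck group $G := G_1/G_j$ is nilpotent as a quotient of the nilpotent group $G_1 = \pi_1(M_1)$ used in \autoref{re}. Unless $G$ is cyclic of prime order, $G$ admits a proper nontrivial normal subgroup $H$ (since nontrivial nilpotent groups have nontrivial center), so the intermediate covering $M_j \to M_j/H \to M_j/G = M_1$ has factor degrees $|H|$ and $n/|H|$, both strictly less than $n$, with nilpotent deck groups, so that the inductive hypothesis applies. This yields
\[
\diam(M_j) \le |H| \cdot \diam(M_j/H) \le |H| \cdot (n/|H|) \cdot \diam(M_1) = n \cdot \diam(M_1),
\]
reducing matters to the case where $G$ is cyclic of prime order $p$.

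For the prime-order base case, I would argue by path lifting: a minimizing geodesic in $M_1$ from $\pi(\tilde x)$ to $\pi(\tilde y)$ has length at most $D := \diam(\widehat{g_i^1})$ and lifts, starting at $\tilde x$, to a curve in $M_j$ ending at some $\tilde y' \in \pi^{-1}(\pi(\tilde y))$. If $\tilde y' \ne \tilde y$, one chains at most $p - 1$ further lifted arcs, each coming from a short loop in $M_1$ representing the generator of $G$ (constructed through the cut-locus of a base point), to reach $\tilde y$. The main obstacle is controlling the total length of this chain exactly by $pD$; a na\"ive bound of this form is only linear in $p$ with a larger constant. The sharp estimate, balancing the lifted arcs and generator loops carefully, is the technical content of Ivanov's argument in \cite{Iva2010}, which we invoke rather than reprove.
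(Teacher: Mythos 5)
There is a genuine gap: your argument never actually proves the inequality. You reduce, via the nilpotent deck group, to the case of a prime-order cyclic cover, and then for that base case you concede that the na\"ive path-lifting/chaining estimate only gives a linear bound with a worse constant, and you ``invoke rather than reprove'' Ivanov's sharp estimate. But the sharp estimate for the base case \emph{is} the content of \autoref{iv}; citing \cite{Iva2010} for it makes the proof circular as a proof, and the inductive reduction buys nothing, since the whole difficulty sits in the base case. (The reduction itself is also unnecessary: the inequality holds for an arbitrary $N$-sheeted covering, with no normality or nilpotency of the deck transformations needed.) Your worry about the chaining step is well founded: a loop generating the relevant coset can only be guaranteed to have length roughly $2\diam$, so chaining gives something like $D+2(p-1)D$ rather than $pD$.

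For comparison, the paper's proof (Ivanov's) is a short self-contained contradiction argument that works directly for any index $N=[G_1:G_j]$: if a geodesic $\gamma^j$ realizing $\diam(\widehat{g_i^j})$ had length $\ell>N\diam(\widehat{g_i^1})$, project it to $M_1$, cut it into $N$ arcs $a_1,\dots,a_N$ of length $\ell/N>\diam(\widehat{g_i^1})$, replace each $a_i$ by a strictly shorter geodesic $b_i$ with the same endpoints, and form the $N+1$ based loops $s_0=e$, $s_k=b_1\cdots b_k(a_1\cdots a_k)^{-1}$. Since the subgroup $H\subset\pi_1(M_1,p)$ of loops lifting closed to $M_j$ has index $N$, two of these loops lie in the same coset, say $(s_k)^{-1}s_r\in H$, and then the path $(s_k)^{-1}s_r\gamma$ lifts to a path from $\bar p$ to $\bar q$ strictly shorter than $\gamma^j$, contradicting minimality. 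If you want a complete write-up, you should reproduce this pigeonhole step rather than defer it.
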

\begin{proof}
We include the proof given by Sergei Ivanov \cite{Iva2010} on Mathoverflow for completeness. Denote $[G_1: G_j]=N$. Suppose the contrary, let $\gamma^j: [0, \ell]\to M^j$ be a geodesic realizes the diameter of $(M^j, \widehat{g_i^j})$ with $\ell>N\diam (\widehat{g_i^1})$. Let $\gamma$ be the image of $\gamma^j$ in $M_1$ under the canonical projection. We broke $\gamma$ into $N$ shorter geodesics of equal length $\ell/N$. Namely
\[
\gamma:=a_1a_2\cdots a_N.
\]
Since $\ell/N>\diam (\widehat{g_i^1})$, there must exists a strictly shorter geodesic $b_i$ connecting the two ends of $a_i$ for each $i\in \{1, \cdots, N\}$. Set $\bar p=\gamma^j(0)$, $p=\gamma(0)$ and $\bar q=\gamma^j(\ell)$, $q=\gamma(\ell)$.  Let $H$ be the subgroup of $\pi_1(M_1, p)$ whose elements(loops) lift to closed loops in $M_j$. Clearly $[\pi_1(M_1, p), H]=N$. Now consider the following $(N+1)$ loops based at $p$:
\[
s_0=e, s_1=b_1a^{-1}_1, s_2=b_1b_2(a_1a_2)^{-1},\cdots, s_N=b_1\cdots b_N(a_1\cdots a_N)^{-1}.
\]
There must exist two element $s_k$ and $s_{r}$ with $k<r$ such that $(s_k)^{-1}s_{r}\in H$. Now the path $(s_k)^{-1}s_{r}\gamma$ still lifts $p$ and $q$ to $\bar{p}$ and $\bar{q}$, yet it is strictly shorter than $\gamma^j$, a contradiction.
\end{proof}

Now we derive a bound of the index of $\widehat{D_i}^j$.
\begin{lem} \label{bound}
There exists a positive constant $C(n)$ depending only on the dimension $n$ such that for each $i, j$, we have
\[
|\ind (\widehat{D_i}^j)| \leq C(n).
\]
\end{lem}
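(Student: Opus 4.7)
The index $\ind(\widehat{D_i}^j)$ equals the holomorphic Euler characteristic $\chi(M_j, \mathcal{O}_{M_j})$ by Hodge theory, hence is a topological invariant of $M_j$ and in particular does \emph{not} depend on $i$. This is the crucial observation: to produce the universal bound $C(n)$ I may, for each $j$ separately, choose whichever value of $i$ makes the analysis easiest, and the resulting bound will automatically hold for all $i$.

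With that in mind, the plan is to choose $i = i(j)$ large enough that the pulled-back K\"ahler metric on $M_j$ has controlled diameter. Applying \autoref{iv} to the covering chain $M \to M_1 \to M_j$ together with the hypothesis $\diam(g_i) \leq 1/i$ yields $\diam(\widehat{g_i}^j) \leq N_j/i$, where $N_j = [\pi_1(M) : \pi_1(M_j)]$. Setting $i(j) \geq N_j$ produces a K\"ahler metric $\widehat{g_{i(j)}}^j$ on $M_j$ with $\diam \leq 1$; since Ricci curvature is preserved under Riemannian coverings, we also have $\Ric(\widehat{g_{i(j)}}^j) \geq -1$. Thus $(M_j, \widehat{g_{i(j)}}^j)$ is a compact K\"ahler manifold of complex dimension $n$ with $\Ric \geq -1$ and $\diam \leq 1$.

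At this point B\'erard's revisited Bochner technique \cite{Ber1988} enters, in the spirit of the Gromov-Gallot estimate: a compact Riemannian manifold with $\Ric \geq -K$ and $\diam \leq D$ has Betti numbers satisfying $b_p \leq F(n, KD^2)$ for an explicit function $F$. In our setting $KD^2 \leq 1$, so $b_p(M_j) \leq F(n,1)$. Because the metric is K\"ahler, the Hodge decomposition gives $h^{0,p}(M_j) \leq b_p(M_j) \leq F(n,1)$, and hence
\[
|\ind(\widehat{D_{i(j)}}^j)| \;\leq\; \sum_{p=0}^{n} h^{0,p}(M_j) \;\leq\; (n+1)F(n,1) \;=:\; C(n).
\]
By the topological invariance of the index noted in the first paragraph, the bound holds for every $i$, not just for $i = i(j)$.

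The main obstacle is pinpointing the correct form of the B\'erard-style estimate, i.e.\ establishing that the Bochner technique on a K\"ahler manifold with $\Ric \geq -K$ and $\diam \leq D$ bounds $h^{0,p}$ by a function of $n$ and $KD^2$ only. Once that input is in hand, the remaining pieces—iterating Ivanov's diameter estimate along the covering tower, exploiting the metric-independence of $\chi(\mathcal{O})$, and comparing $h^{0,p}$ to $b_p$ via Hodge theory—are all routine.
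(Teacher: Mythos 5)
Your overall strategy is the same as the paper's: exploit the metric-independence of $\ind(\widehat{D_i}^j)=\chi(M_j,\mathcal{O}_{M_j})$ to decouple $i$ from $j$, use \autoref{iv} to control $\diam(\widehat{g_i}^j)$, and then invoke a dimension bound depending only on $n$ for a K\"ahler manifold with $\Ric\geq -1$ and controlled diameter. (The paper fixes $j$ and lets $i\to\infty$ so that the diameter tends to $0$; your normalization $\diam\leq 1$ via $i(j)\geq N_j$ is an equally good way to set this up.)

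However, the analytic input you plug in at the key step is false, and this is a genuine gap. There is no estimate of the form $b_p\leq F(n,KD^2)$ for all $p$ under only a Ricci lower bound and a diameter bound: Gromov's Betti number theorem requires a \emph{sectional} curvature lower bound, and the examples of Sha--Yang and Perelman (connected sums of arbitrarily many copies of $S^2\times S^2$, resp.\ $\mathbb{CP}^2$, carrying metrics of positive Ricci curvature and bounded diameter) have arbitrarily large $b_2$. So the detour ``bound $b_p$ by Ricci $+$ diameter, then use $h^{0,p}\leq b_p$'' cannot work. What does work --- and is what the paper does --- is to bound the Hodge numbers (equivalently $\dim\ker$ and $\dim\coker$ of $\widehat{D_i}^j=\partial+\partial^*$ on $\oplus_k\wedge^{2k,0}$) \emph{directly}: on a K\"ahler manifold the Weitzenb\"ock curvature term for this operator is controlled by the Ricci curvature alone, so for $s\in\ker(\widehat{D_i}^j)$ one gets $|s|\Delta|s|\geq -C(n)|s|^2$ via Kato's inequality; then the Moser-iteration sup bound of \autoref{sb1} (with $h\equiv C(n)$, so $\Lambda$ and $R$ are controlled once $\Ric\geq-1$ and the diameter is bounded) together with B\'erard's dimension estimate \autoref{peter} give $\dim\ker,\dim\coker\leq C(n)$. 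You actually flag ``bounding $h^{0,p}$ by the Bochner technique'' as the remaining obstacle, but as written your proof body leans on the incorrect Betti-number statement rather than on this curvature-term-controlled-by-Ricci argument, which is precisely the nontrivial content of the lemma.
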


\begin{rem}
It is essential for us that the constant $C(n)$ is independent of $i, j$.
\end{rem}

\begin{proof}
We will prove \autoref{bound} using the method in \cite{Ber1988}. It is based on the following lemmas.

\begin{lem}\label{peter}
For a smooth section $s$ of an Euclidean vector bundle $E$ over a Riemannian manifold $(M,g)$, let
\[
\|s\|_{\infty} = \sup \{{|s|_x: x \in M}\},
\]
\[
\|s\|_2^2=\int_{M} \|s\|_x^2 dV_g,
\]
\[
L(s)=\frac{V_g \|s\|_{\infty}^2 }{\|s\|_2^2},
\]
where $V_g$ is the volume of $(M,g)$. Given a finite dimensional subspace $F$ of $C^{\infty} (E)$, we have
\[
\dim F \leq l \sup\{{L(s): s \in F-0}\},
\]
where $l$ is the rank of $E$.
\end{lem}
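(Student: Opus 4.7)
The plan is to play off two quantities attached to an $L^{2}$-orthonormal basis $s_{1},\ldots,s_{d}$ of $F$ (where $d=\dim F$): the pointwise function
\[
\phi(x):=\sum_{i=1}^{d}|s_{i}|_{x}^{2},
\]
whose integral over $M$ is pinned down by orthonormality, and the pointwise supremum of $|s|_{x}^{2}/\|s\|_{2}^{2}$ over $s\in F\setminus\{0\}$, which one expects to be bounded below by $\phi(x)/l$ since the fiber $E_{x}$ has only $l$ real dimensions.

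First, I would fix an orthonormal basis $s_{1},\ldots,s_{d}$ of $F$ with respect to the $L^{2}$ inner product $\langle s,t\rangle_{2}=\int_{M}\langle s,t\rangle\,dV_{g}$ and form $\phi$ as above. Orthonormality gives $\int_{M}\phi\,dV_{g}=\sum_{i}\|s_{i}\|_{2}^{2}=d$, so by the mean value inequality there exists $x_{0}\in M$ with $\phi(x_{0})\geq d/V_{g}$.

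Second, I would establish the pointwise estimate
\[
\sup_{s\in F\setminus\{0\}}\frac{|s|_{x}^{2}}{\|s\|_{2}^{2}}\ \geq\ \frac{\phi(x)}{l}\qquad\text{for every }x\in M.
\]
Writing $s=\sum_{i}a_{i}s_{i}$ one has $\|s\|_{2}^{2}=\sum a_{i}^{2}$, and after choosing an orthonormal frame $\{e_{\alpha}\}_{\alpha=1}^{l}$ of the fiber $E_{x}$, the evaluation map $F\to E_{x}$, $s\mapsto s(x)$, becomes the linear map $\R^{d}\to\R^{l}$ given by a matrix $C=(c_{i\alpha})$ with $s_{i}(x)=\sum_{\alpha}c_{i\alpha}e_{\alpha}$. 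Then $|s|_{x}^{2}/\|s\|_{2}^{2}=|Ca|^{2}/|a|^{2}$ is a Rayleigh quotient whose supremum is $\sigma_{\max}(C)^{2}$, while the squared Frobenius norm is $\|C\|_{F}^{2}=\sum_{i,\alpha}c_{i\alpha}^{2}=\phi(x)$. Since $C$ has rank at most $l$, distributing $\|C\|_{F}^{2}$ among at most $l$ nonzero singular values forces $\sigma_{\max}(C)^{2}\geq \phi(x)/l$.

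Combining the two steps at $x=x_{0}$ produces some $s\in F\setminus\{0\}$ with
\[
\|s\|_{\infty}^{2}\ \geq\ |s|_{x_{0}}^{2}\ \geq\ \frac{\phi(x_{0})}{l}\|s\|_{2}^{2}\ \geq\ \frac{d}{l V_{g}}\|s\|_{2}^{2},
\]
which rearranges to $L(s)\geq d/l$ and hence $\dim F\leq l\,\sup\{L(s):s\in F\setminus\{0\}\}$, as claimed. The only nontrivial point is the elementary linear-algebra bound in step two relating $\sigma_{\max}(C)^{2}$ to the Frobenius norm and the rank; everything else is bookkeeping.
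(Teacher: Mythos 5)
Your proof is correct. The paper itself gives no argument for this lemma (it defers to B\'erard, and ultimately to P.~Li's lemma), and your proof is essentially that standard one: integrate $\phi(x)=\sum_i|s_i|_x^2$ for an $L^2$-orthonormal basis to find a point with $\phi(x_0)\geq d/V_g$, then use the fact that the evaluation map $F\to E_{x_0}$ has rank at most $l$ to produce a single section concentrating at least $\phi(x_0)/l$ of its $L^2$-mass pointwise. Two minor remarks: the evaluation map in your coordinates sends $a$ to $C^{T}a$ rather than $Ca$, which is harmless since the largest singular value and the Frobenius norm are transpose-invariant; and the classical writeup replaces your singular-value/Frobenius argument by rotating the orthonormal basis of $F$ so that at most $l$ of the $s_i$ are nonzero at $x_0$, giving $\phi(x_0)\leq l\max_i|s_i(x_0)|^2$ --- an equivalent piece of linear algebra, so the two routes are the same in substance.
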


We refer the readers to \cite{Ber1988} for the proof of \autoref{peter}.

\begin{lem}\label{sb1}
Let $(M, g)$ be a closed $m$-dimensional smooth Riemannian manifold such that for some constant $b>0$,
\[
r_{\min}(g) (\diam(g))^2 \geq-(m-1)b^2.
\]
If $f\in W^{1,2}(M)$ is a nonnegative continuous functions such that $f \Delta f \geq -h f^2$ (here $\Delta$ is a negative operator)
in the sense of distribution for some nonnegative continuous function $h$ then
\[
\max_{x \in M}|f|^2(x) \leq C(m, p, R, \Lambda) \frac{\int_M f^2 dV}{V(g)},
\]
where $C(m, p, R, \Lambda)$ is some constant depending only on $m, p, R=\frac{\diam(g)}{b C(b)}$ and
\[
\Lambda=\frac{\int_M h^p dV}{V(g)}, p > \frac{m}{2}.
\]
\end{lem}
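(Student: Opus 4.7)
The plan is a classical Moser iteration, with the Sobolev inequality supplied by B\'erard's work \cite{Ber1988} on manifolds satisfying a scale-invariant lower Ricci bound and diameter bound.

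\textbf{Caccioppoli-type inequality for powers of $f$.} For any $q \geq 1$ set $u = f^q$. Since $\Delta u = qf^{q-1}\Delta f + q(q-1)f^{q-2}|\nabla f|^2$, the assumption $f\Delta f \geq -hf^2$ gives
$$u\,\Delta u \;=\; qf^{2q-2}\bigl(f\Delta f\bigr) + q(q-1)f^{2q-2}|\nabla f|^2 \;\geq\; -qhu^2 + q(q-1)f^{2q-2}|\nabla f|^2$$
in the distributional sense. Integrating and using $\int_M u\,\Delta u = -\int_M |\nabla u|^2 = -q^2\int_M f^{2q-2}|\nabla f|^2$, a simple rearrangement yields the weighted Caccioppoli bound
$$\int_M |\nabla u|^2\, dV \;\leq\; q\int_M h u^2\, dV.$$

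\textbf{Sobolev inequality.} The hypothesis $r_{\min}(g)\,\diam(g)^2 \geq -(m-1)b^2$ is precisely the scale-invariant Ricci-diameter condition under which B\'erard proves a uniform Sobolev inequality. Consequently there exist constants $A, B$ depending only on $m$ and $R = \diam(g)/(bC(b))$ such that for every $v \in W^{1,2}(M)$,
$$\left(\frac{1}{V(g)}\int_M v^{2\kappa}\, dV\right)^{1/\kappa} \;\leq\; \frac{A}{V(g)}\int_M v^2\, dV + \frac{B\,\diam(g)^2}{V(g)}\int_M |\nabla v|^2\, dV,$$
with Sobolev exponent $\kappa = m/(m-2)$ for $m \geq 3$ (any large exponent will do if $m \leq 2$).

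\textbf{Moser iteration.} H\"older's inequality with conjugate exponents $p$ and $p/(p-1)$ gives
$$\frac{1}{V(g)}\int_M h u^2\, dV \;\leq\; \Lambda^{1/p}\left(\frac{1}{V(g)}\int_M u^{2p/(p-1)}\, dV\right)^{(p-1)/p}.$$
Because $p > m/2$, the exponent $p/(p-1)$ is strictly smaller than $\kappa$, so interpolating between $L^2$ and $L^{2\kappa}$ and absorbing into the Sobolev inequality (fed by the Caccioppoli bound of the first step) produces, with $u = f^q$, a recursive estimate of the shape
$$\left(\frac{1}{V(g)}\int_M f^{2q\kappa}\, dV\right)^{1/(q\kappa)} \;\leq\; (Cq)^{1/q}\left(\frac{1}{V(g)}\int_M f^{2q}\, dV\right)^{1/q}.$$
Iterating over $q = \kappa^k$ for $k = 0, 1, 2, \ldots$ and noting that $\sum_k k\kappa^{-k} < \infty$, one obtains in the limit the desired inequality $\max_M f^2 \leq C(m, p, R, \Lambda)\,V(g)^{-1}\int_M f^2\, dV$.

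The main bookkeeping obstacle is to verify that every constant produced along the iteration depends only on the stated parameters $(m, p, R, \Lambda)$ and not on the individual geometry of $g$. This reduces to knowing that B\'erard's Sobolev constants $A, B$ are uniform under the scale-invariant hypothesis on $r_{\min}(g)\diam(g)^2$, which is exactly the output of \cite{Ber1988}; with this in hand the iteration sums and the $L^\infty$-bound follows cleanly.
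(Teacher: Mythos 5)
Your proposal is correct and follows essentially the same route as the proof the paper relies on (it cites \cite{Ber1988} and \cite{CH2021} rather than reproving the lemma): a Moser iteration driven by B\'erard--Gallot's uniform Sobolev inequality under the scale-invariant bound $r_{\min}(g)\diam(g)^2\geq-(m-1)b^2$, with H\"older in $h$ using $p>\frac m2$ and absorption via interpolation. Indeed the paper's explicit constant, with $\mu=m/(m-2)$ and the geometric sums $K_1=\sum i\mu^{-i}$, $K_2=\sum\mu^{-i}$, is exactly the bookkeeping output of the iteration you describe.
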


We refer the readers to see \cite{CH2021} or  \cite{Ber1988} for the proof of Lemma \ref{sb1}.
Several notations in the above lemma need to be clarified here:

$(1) \ r_{\min}(g)=\inf\{{\Ric(g)(u,u): u \in TM, g(u,u)=1}\}$

(2) $V(g)$ is the volume of $(M, g)$.

(3)\ $C(b)$ is the unique positive root of the equation $ x \int_0^{b}(cht + x sht)^{m-1} dt=\int_0^{\pi} \sin^{m-1}t dt.$

The following explicit expression of the constant $C(m, p, R, \Lambda)$ derived in \cite{CH2021} is important for us.
Let $v=\frac{m}{2}$ if $m>2$ and $ 1 < v < p $ be arbitrary for  $m=2$. Let $\mu$ be the conjugate of $v$ such that
\[
\frac{1}{v} + \frac{1}{\mu}=1.
\]
Then
\begin{equation} \label{ex}
C(m, p, R, \Lambda)=\mu^{2K_1 \frac{p(\mu-1)}{\mu (p-1)-p}} B^{2 K_2}
\end{equation}

\begin{equation}
B=C(m,p) \Lambda^{\frac{1}{2} \frac{\mu-1}{\mu(p-1)-p}} R^{\frac{p(\mu-1)}{\mu (p-1)-p}} +2
\end{equation}

\begin{equation}
K_1= \sum i {\mu}^{-i},  \ K_2=\sum {\mu}^{-i},
\end{equation}
where $C(m,p)$ is some positive constant depending only on $m, p$.

Now we apply \autoref{peter} and \autoref{sb1} to prove \autoref{bound}, namely,
\[
|\ind (\widehat{D_i}^j)| \leq C(n), \ n=\dim_{\mathbb{C}} M.
\]

Let $E_j=\oplus_k \wedge^{2k, 0}(M_j), F_{i, j}=\ker (\widehat{D_i}^j).$ Then for any $s \in \ker (\widehat{D_i}^j)$, applying the Bochner formula to $s$, we get
\[
\frac{1}{2}\Delta |s|^2 = |\nabla s|^2  + \langle \mathcal{F} s, s \rangle.
\]
As $g_i$ is a \kahler metric for each $i$, it is crucial here that the curvature term $\langle \mathcal{F} s, s \rangle$ is controlled by the Ricci curvature, see for example Page 82 in \cite{Bal2006}.
By assumption, we have
\[
\Ric(g_i) \geq  -1, \ \diam (g_i) \leq \frac{1}{i}.
\]
Then $\Ric(\widehat{g_i}^j) \geq  -1$ and there exists some positive constant $C(n)$ depending only on $n$ such that
\[
\frac{1}{2}\Delta |s|^2 \geq |\nabla s|^2 - C(n) |s|^2.
\]

By Kato's inequality  \cite{Ber1988}, we have
\[|\nabla s| \geq |\nabla |s||.
\]

Then we get
\[
|s| \Delta |s| \geq - C(n) |s|^2.
\]

Let $G_j=\pi_1(M_j), G_0=\pi_1(M)$. By \autoref{iv}, we get
\[\diam (\widehat{g_i}^j) \leq  [G_1: G_j] \diam(\widehat{g_i}^1)
\leq [G_1: G_j] \ [G_0: G_1] \diam(g_i).
\]
As $\diam (g_i) \leq \frac{1}{i}$, we get
\[ \diam (\widehat{g_i}^j) \leq \frac{[G_1: G_j] \ [G_0: G_1]}{i}.
\]
\par
Applying \autoref{sb1} to $|s|$ for any $p>n$ (for example take $p=n+1$), we get
\[
|s|^2_{\infty}=:\max_{x \in M}|s|^2(x) \leq   C(n,i,j) \frac{\int_{M} |s|^2 dV_i}{V(g_i)},
\]
where $C(n,i,j)$ is some positive constant depending on $n, i, j$ (which can be described explicitly via \autoref{ex}).

Applying \autoref{peter} to $F_{i, j}$, we get
\[
\dim (\ker (\widehat{D_i}^j)) \leq C(n, i, j).
\]
As $\widehat{D_i}^j$ is self-adjoint, we can also show that
\[
\dim (\coker (\widehat{D_i}^j)) \leq C'(n, i, j),
\]
where $C'(n,i,j)$ is some positive constant depending on $n, i, j$ (which can be described explicitly via \autoref{ex}).
\par
Then
\[
|\ind (\widehat{D_i}^j)| \leq \max(C(n, i, j), C'(n, i, j)).
\]

\par
For a fixed $j$, we have that $\ind (\widehat{D_i}^j)$ is independent of $i$. Since $\Ric(\widehat{g_i}^j) \geq  -1$ and $\lim_{i \rightarrow \infty} \diam (\widehat{g_i}^j)=0$ for a fixed $j$, then we get
\[
|\ind (\widehat{D_i}^j)|=\lim_{i \rightarrow \infty} |\ind (\widehat{D_i}^j)| \leq C(n),
\]
where $C(n)$ is some positive constant depending only on $n$. It is essential for us that the constant $C(n)$ is independent of $i, j$.
\end{proof}

Now we are going to finish the proof of \autoref{nice}. As Todd genus is multiplicative under a finite covering \cite{Hir1995}, we have
\[
|\ind (\widehat{D_i}^1)|\ [G_1: G_j] =|\ind (\widehat{D_i}^j)|.
\]
By \autoref{re} and \autoref{bound}, we get
\[
|\ind (\widehat{D_i}^j)| \leq C(n), \ \lim_{j \rightarrow \infty} [G_1: G_j]=\infty.
\]
Hence for each $i$, we must have
\[
|\ind (\widehat{D_i}^1)| =0.
\]
It follows that $M_1$ has vanishing Todd genus, so does $M$.

The proof of \autoref{spin} is similar to that of \autoref{nice}. Here we look at the Atiyah-Singer spin Dirac operator $D$. The curvature term of the Bochner formula applied to the kernel of $D$ is bounded from below by the scalar curvature. Then we can apply \autoref{peter} to get a bound of $\hat{A}$-genus. This in fact was already proved in
\cite{Gro1981, Gal1983}. In the presence of infinite fundamental group, then our arguments given above can be used to show that the $\hat{A}$-genus is in fact zero.

Now we deal with the proof of \autoref{string}. By \autoref{mo}, $Ell_2(M)$ is determined by $ \ind (D\otimes B_k(T_\CC M)), 0 \leq k \leq \frac{m}{2}$, where $B_k(T_\CC M)$ involves linear combinations of tensor product of $TM$ at most to power ${[\frac{m}{2}]}$. The kernel of $D\otimes B_j(T_\CC M)$ is the space of twisted harmonic spinors. Then the curvature term of the Bochner formula applied to twisted harmonic spinors is bounded from below by $-C(m) |R_{g_i}|$, where $C(m)$ is some positive constant depending only on $m$. The rest part of proof is almost identical to the proof of \autoref{nice}. As we do not assume a pointwise bound of the Riemannian curvature tensor, in order to get a bound of the dimension of the space of twisted harmonic spinors, the integral curvature assumption $\frac{1}{V_i}\int_{M} |R_{g_i}|^p dV_i \leq \Lambda$ will be used. In the presence of infinite fundamental group, then our arguments can be used to show that $Ell_2(M)=0$. By \autoref{mod1},
\[
Ell_1(M, -1/\tau)={(2\tau)}^{2m}Ell_2(M, \tau)=0
\]
In particular, we get that the first term in the $q$-expansion of $Ell_1(M)$: the signature $\sigma(M)=0$.  This proves Part (i), concerning the vanishing of elliptic genera in \autoref{string}.

Concerning the Witten genus, when $M$ is string, the Witten genus $W(M)$ is determined by finitely many (depending only on the dimension of $M$) $D\otimes W_j(T_\CC M)$ by \autoref{mod0}, the rest of the proof is identical to the one for elliptic genera.


\section{Vanishing theorems on Euler number}

The Euler number of a manifold is not a genus. However, certain vanishing results about Euler number have been obtained under \emph{almost nonnegative sectional curvature} assumption. Recall that a smooth Riemannian manifold $M$ has almost nonnegative sectional curvature if it admits a sequence of Riemannian metrics $\{g_i\}_{i\in \mathbb N}$ such that
\[
\sec (g_i) \geq -1,\quad
\diam(g_i) \leq \frac{1}{i},
\]
where $\sec(g_i)$ denotes the sectional curvatures of $g_i$.

Fukaya-Yamaguchi proved that a closed manifold with almost nonnegative sectional curvature and infinite fundamental group has vanishing Euler number. Their proof is based on a fibration theorem \cite{FY1992} characterizing these manifolds. We are going to give a new proof of this result based on the idea developed in previous proofs. In fact our proof works for a larger class of spaces: almost nonnegatively curved Alexandrov spaces. Alexandrov spaces are generalizations of Riemannian manifolds with  lower sectional  curvature bound. They could have topological singularities, for example the spherical cone over $\mathbb{CP}^n$ with Fubini-Study metric is an Alexandrov space. For background of Alexandrov spaces, cf \cite{BGP1992, BBI2001}. Note that due to the possible singularities on Alexandrov spaces, there is no fibration theorem available for almost nonnegatively curved 
  Alexandrov spaces. Yet, we still have:

\begin{thm}\label{qr}
Let $X$ be an $m$-dimensional almost nonnegatively curved Alexandrov space with infinite fundamental group, then the Euler number of $X$ vanishes.
\end{thm}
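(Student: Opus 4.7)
The proof plan is to run the covering-tower argument of \autoref{sec:2} for the topological Euler number, but to replace the Bochner/index bound (which is unavailable in the Alexandrov category) by Gromov's Betti number estimate as extended to Alexandrov spaces by Perelman. The key observation is that although an Alexandrov space carries no Dirac operator, the topological invariant we care about is controlled a priori by a constant depending only on the dimension, and the covering tower then upgrades this bound to vanishing.

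First I would invoke the almost-flat/nilpotent-by-finite structure for $\pi_1(X)$ (the Alexandrov analogue of Kapovitch--Wilking, known in this setting), producing a finite cover $X_1 \to X$ with $\pi_1(X_1)$ finitely generated nilpotent. Since $\pi_1(X)$ is infinite, so is $\pi_1(X_1)$, and \autoref{re} supplies a descending chain of finite-index normal subgroups $G_1 \supset G_2 \supset \cdots$ with $[G_1:G_j]\to\infty$. Let $X_j\to X_1$ be the corresponding finite cover, and let $\widehat g_i^j$ be the pullback to $X_j$ of the given sequence $g_i$ on $X$; these pulled-back intrinsic metrics are again Alexandrov with $\sec \geq -1$, and the proof of \autoref{iv} is purely length-metric and so applies verbatim, giving
\[
\diam(\widehat g_i^j) \;\leq\; \frac{[G_1:G_j]\,[\pi_1(X):G_1]}{i}.
\]

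Next, for each fixed $j$, choose $i=i(j)$ large enough that $\diam(\widehat g_{i(j)}^j) \leq 1$. By the Gromov--Perelman Betti number theorem, every compact $m$-dimensional Alexandrov space with $\sec \geq -1$ and $\diam \leq 1$ has total Betti number bounded by a constant $C(m)$ depending only on $m$; in particular
\[
|\chi(X_j)| \;\leq\; \sum_k b_k(X_j) \;\leq\; C(m),
\]
where $\chi$ is computed from any CW/triangulated structure on $X_j$ (compact Alexandrov spaces admit such triangulations). Since Euler characteristic is multiplicative under finite covers, $\chi(X_j) = [G_1:G_j]\,\chi(X_1)$, and letting $j\to\infty$ with the left side uniformly bounded and the index diverging forces $\chi(X_1)=0$; one more application of multiplicativity to $X_1\to X$ yields $\chi(X)=0$.

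The main obstacle is the availability of the two Alexandrov analogues of the smooth tools: (a) the nilpotent-by-finite structure of $\pi_1$ under almost nonnegative curvature, and (b) the polynomial Betti number bound for Alexandrov spaces of diameter $\leq 1$ and $\sec \geq -1$. Both are in the literature (Perelman's extension of Gromov's critical point theory handles (b); (a) is classical for Alexandrov in the sectional sense), so the substantive content of the argument is really the combinatorial trick of trading a uniform size bound against an arbitrarily divergent covering index via \autoref{re} and \autoref{iv}, exactly as in the proofs of \autoref{nice}, \autoref{spin}, \autoref{string}.
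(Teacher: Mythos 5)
Your argument is essentially the paper's own proof: pass to a finite cover with nilpotent fundamental group, run the tower from \autoref{re}, observe the pulled-back metrics are still almost nonnegatively curved (your explicit use of \autoref{iv} and the choice $i=i(j)$ just makes precise what the paper leaves implicit), bound the total Betti number of each cover by the Gromov-type estimate for Alexandrov spaces \cite{LS1994}, and conclude by multiplicativity of the Euler number over the divergent indices $[G_1:G_j]$. One small caveat: your parenthetical appeal to triangulability of compact Alexandrov spaces is not justified (this is open in higher dimensions); instead, the Euler number is well defined and multiplicative under finite covers because compact Alexandrov spaces admit finite good covers \cite{MY2019} and are therefore homotopy equivalent to finite CW complexes.
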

\begin{proof}
By \cite{Ya}, the fundamental group of $X$ contains a nilpotent subgroup of finite index. By passing to a finite cover, we can assume that $\pi_1 (X)$ is nilpotent. By \autoref{re}, there is a sequence of subgroups $G_j$ of $\pi_1(X)$ with finite index such that $\lim_{j \rightarrow +\infty} [G_1: G_j]  =\infty$. Then  $\widetilde{X}/ G_j$ is a finite cover of $X$, where $\widetilde{X}$ is the universal covering of $X$. Clearly, $\widetilde{X}/ G_j$ is almost nonnegatively curved by pulling back the metrics. By the extension of Gromov's Betti number estimate to Alexandrov spaces (see the main Theorem of \cite{LS1994}), we get that for any $j$,
\[
\sum_{p=0}^m b_p(\widetilde{X}/ G_j)\le C(m).
\]
This in particular implies that the absolute value of Euler number of $\widetilde{X}/ G_j$ is bounded above by some constant depending only on $m$.
As $\lim_{j \rightarrow +\infty} [G_1: G_j]  =\infty$, we see that the Euler number of $X$ is zero by the multiplicity property of Euler number under a finite covering.
\end{proof}

The same idea can also be used to prove a vanishing theorem about $L^2$ Betti numbers under almost nonnegative sectional curvature condition:
\begin{thm}\label{L2qr}
Let $M$ be a compact manifold (or more generally a compact Alexandrov space) with almost nonnegative sectional curvature and infinite fundamental group, then all the $L^2$ Betti numbers of its universal covering $\widetilde{M}$ vanish.
\end{thm}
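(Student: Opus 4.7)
The plan is to mimic the strategy used for \autoref{qr}, replacing the trivial multiplicativity of Euler characteristic under finite covers by the multiplicativity of $L^2$ Betti numbers, together with Lück's approximation theorem to transfer a uniform bound on ordinary Betti numbers of finite covers into the vanishing of $L^2$ Betti numbers of the universal cover.

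First, by Yamaguchi's generalization of the Fukaya--Yamaguchi theorem to Alexandrov spaces \cite{Ya}, after passing to a finite cover $M_1$ of $M$ we may assume $\pi_1(M_1)$ is nilpotent. Since a finitely generated nilpotent group is residually finite by Hirsch's theorem, \autoref{re} produces a descending chain of normal subgroups of finite index
\[
\pi_1(M_1)=G_1 \supset G_2 \supset \cdots,\qquad \bigcap_j G_j=\{1\},\qquad \lim_{j\to\infty}[G_1:G_j]=\infty.
\]
Let $M_j=\widetilde{M}/G_j$, which is an $m$-dimensional almost nonnegatively curved Alexandrov space. By the extension of Gromov's Betti number estimate to Alexandrov spaces \cite{LS1994},
\[
\sum_{p=0}^m b_p(M_j) \le C(m)
\]
for a constant $C(m)$ independent of $j$.

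Next, I would invoke Lück's approximation theorem: for a residually finite group $\Gamma$ with a cofinal nested sequence $G_j$ of finite-index normal subgroups with $\bigcap_j G_j=\{1\}$, the $L^2$ Betti numbers of the universal cover satisfy
\[
b_p^{(2)}(\widetilde{M_1}) \;=\; \lim_{j\to\infty}\frac{b_p(M_j)}{[G_1:G_j]}.
\]
Combining with the uniform bound $b_p(M_j)\le C(m)$ and $[G_1:G_j]\to\infty$ gives $b_p^{(2)}(\widetilde{M_1})=0$ for every $p$. Since $\widetilde{M_1}=\widetilde{M}$ and $L^2$ Betti numbers are multiplicative under the finite cover $M_1\to M$ (so $b_p^{(2)}(\widetilde{M})=0$ iff $b_p^{(2)}(\widetilde{M_1})=0$), the conclusion follows.

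The main obstacle is justifying Lück's approximation theorem in the Alexandrov setting, since his original proof is formulated for finite CW complexes / compact manifolds. For the manifold case of the theorem there is nothing to worry about. For the Alexandrov case one needs a suitable CW or simplicial model of $M$ on which the cellular $L^2$ chain complex is defined; this can be arranged using a triangulation of $M$ (or of a nerve of a cover by strainer neighborhoods), after which Lück's algebraic argument about convergence of spectral measures of the combinatorial Laplacians applies verbatim. Once that foundational point is handled, the rest of the argument is a direct translation of the proof of \autoref{qr}.
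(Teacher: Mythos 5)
Your proposal follows the paper's proof essentially step for step: pass to a finite cover with nilpotent fundamental group, invoke \autoref{re}, and combine the uniform Betti number bound for almost nonnegatively curved spaces (via \cite{LS1994} in the Alexandrov case) with L\"uck's approximation theorem (\autoref{lem2}) to force $\lim_j b_p(\widetilde{M}/G_j)/[G_1:G_j]=0$. The only difference is in the foundational point for Alexandrov spaces: the paper gets a finite CW model via the good cover and nerve theorem of \cite{MY2019} (triangulability of general Alexandrov spaces is not known), which is precisely your fallback option, so the argument is the same.
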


We refer the reader to \cite{Lue2002} for the precise definition of $L^2$-Betti numbers. We only recall the following two facts on $L^2$-Betti numbers. See pages 37,  51 and 476 in \cite{Lue2002} for details.
\begin{lem}[\cite{Lue1994}]\label{lem2}
Let $M$ be a finite connected CW-complex with fundamental group $G$. Suppose there exists a sequence of normal subgroups of finite index
\[
G=G_1 \supset G_2 \supset G_3 \supset \cdots \supset G_j \supset \cdots
\]
such that  $\bigcap_j G_j=\{{1}\}$, then the $p$-th $L^2$ Betti number of $\widetilde{M}$ is equal to
\[
\lim_{j \rightarrow +\infty} \frac{b_p(\widetilde{M} / G_j)}{[G_1: G_j]}.
\]
\end{lem}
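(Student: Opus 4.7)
The plan is to prove Lück's approximation theorem by comparing spectral density functions of the combinatorial Laplacian on $\widetilde{M}$ with those on its finite covers. Fix a $G$-equivariant cellular decomposition of $\widetilde{M}$ lifted from $M$, and form the combinatorial Laplacian $\Delta_p=\partial_{p+1}\partial_{p+1}^{*}+\partial_p^{*}\partial_p$ in degree $p$, whose matrix of coefficients lies in $M_{n_p}(\mathbb{Z}G)$ (here $n_p$ is the number of $p$-cells of $M$). As an operator $\Delta_p^{(2)}$ on $\ell^2(G)^{n_p}$ it is positive self-adjoint and $\mathcal{N}(G)$-linear, and its spectral density function
\[
F(\lambda)=\tr_{\mathcal{N}(G)}\!\bigl(\chi_{[0,\lambda]}(\Delta_p^{(2)})\bigr)
\]
satisfies $F(0)=b_p^{(2)}(\widetilde{M})$. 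The same matrix over $\mathbb{Z}G$ descends to $\Delta_p^j$ on $\mathbb{C}[G/G_j]^{n_p}$, with normalised spectral density $F_j(\lambda)=[G_1:G_j]^{-1}\dim_{\mathbb{C}}\chi_{[0,\lambda]}(\Delta_p^j)$ satisfying $F_j(0)=b_p(\widetilde{M}/G_j)/[G_1:G_j]$. The task is to prove $F_j(0)\to F(0)$.

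The first step is convergence of polynomial moments. For any polynomial $q$, the operator $q(\Delta_p^{(2)})$ is a matrix over $\mathbb{C}G$, and since $\tr_{\mathcal{N}(G)}(\sum a_g g)=a_e$, it suffices to show that for each fixed $g\in G$ the normalised trace of left multiplication by $g$ on $\mathbb{C}[G/G_j]$ converges to $\delta_{g,e}$. This trace equals $\#\{xG_j:gxG_j=xG_j\}/[G_1:G_j]$; normality of $G_j$ makes it $1$ when $g\in G_j$ and $0$ otherwise, and $\bigcap_j G_j=\{1\}$ forces $g\notin G_j$ eventually for each $g\neq e$. Hence $[G_1:G_j]^{-1}\tr(q(\Delta_p^j))\to \tr_{\mathcal{N}(G)}(q(\Delta_p^{(2)}))$ for every polynomial $q$. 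Because $\|\Delta_p^j\|$ and $\|\Delta_p^{(2)}\|$ are uniformly bounded by a constant $K$ depending only on the cellular structure, Stone--Weierstrass upgrades this to $\int f\,dF_j\to \int f\,dF$ for every $f\in C([0,K])$. Right-continuity of $F$ combined with a cutoff argument then yields $\limsup_j F_j(0)\leq F(0)$.

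The reverse inequality $\liminf_j F_j(0)\geq F(0)$ is where the real difficulty lies, since weak convergence of the spectral measures a priori allows the atomic mass of $dF$ at $0$ to appear as $dF_j$-mass on a shrinking interval $(0,\epsilon_j)$. The way to rule this out is to exploit the integrality of the cellular chain complex: $\Delta_p^j$ acts on $\mathbb{C}[G/G_j]^{n_p}$ by an integer matrix, so the product of its nonzero eigenvalues is a nonzero integer, whence
\[
\int_{0^{+}}^{K}\log\lambda\,dF_j(\lambda)\;\geq\;-\log K\cdot F_j(0)
\]
which, rearranged, gives a uniform-in-$j$ lower bound on $\int_{0^{+}}^{K}\log\lambda\,dF_j(\lambda)$. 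Passing to the limit via the weak convergence established above and the easy $\liminf F_j(\lambda)\geq F(\lambda)$ at continuity points, one obtains the same log-integrability for $dF$, which pins down $F(0)$ as the actual atomic mass of $dF$ at $0$ rather than a spuriously larger value. A standard escape-of-mass argument, trading the integrability of $\log\lambda$ near $0$ against the putative concentration of $dF_j$-mass near $0$, then forces $\liminf_j F_j(0)\geq F(0)$. I expect this Fuglede--Kadison / log-determinant step to be the main obstacle: it is the one place where the proof genuinely uses that the coefficients lie in $\mathbb{Z}G$ rather than $\mathbb{C}G$, and it is what distinguishes Lück's theorem from a formal consequence of weak convergence of spectral measures.
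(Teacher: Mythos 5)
The paper gives no proof of this lemma at all: it is quoted verbatim from L\"uck's approximation theorem (\cite{Lue1994}, see also \cite{Lue2002}), so there is no internal argument to compare against, and your sketch is in substance L\"uck's own proof --- normalised trace convergence on polynomials of the $\mathbb{Z}G$-Laplacian (using normality of the $G_j$ and $\bigcap_j G_j=\{1\}$), a uniform spectral bound plus Stone--Weierstrass to get weak convergence of the spectral density functions, right-continuity of $F$ for $\limsup_j F_j(0)\le F(0)$, and the integrality of the cellular Laplacian to prevent escape of mass to $0^+$ for the reverse inequality. The one blemish is the displayed inequality in the final step: integrality gives directly $\int_{0^+}^{K}\log\lambda\, dF_j(\lambda)\ge 0$, which is usually exploited in the equivalent uniform form $F_j(\lambda)-F_j(0)\le n_p\log K/\log(1/\lambda)$ for $0<\lambda<1$; your version ($\ge -\log K\cdot F_j(0)$) is weaker and slightly oddly stated, but since $F_j(0)\le n_p$ it still yields a $j$-independent lower bound, so the concluding escape-of-mass argument goes through as you describe.
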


\begin{lem}\label{lem1}
The Euler number of $M$ is equal to the alternative sum of $L^2$ Betti numbers of $\widetilde{M}$.
\end{lem}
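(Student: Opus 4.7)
The plan is to identify both sides of the claimed equality with the index of the de Rham complex: on the left with the ordinary Fredholm index of the Euler operator on $M$, and on the right with its $L^2$-index on the universal cover $\widetilde{M}$. The bridge between the two is Atiyah's $L^2$-index theorem, which in this setting asserts that for any elliptic operator $D$ on a compact manifold $M$, the ordinary index of $D$ equals the $\Gamma$-index of its lift $\widetilde{D}$ to the universal cover, where $\Gamma=\pi_1(M)$ acts by deck transformations.

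First, I would take the rolled-up de Rham operator
\[
D^{+}=d+d^{*}:\Omega^{\mathrm{even}}(M)\To \Omega^{\mathrm{odd}}(M),
\]
which is elliptic and whose Fredholm index, by Hodge theory on $M$, is
\[
\mathrm{ind}(D^{+})=\sum_{p=0}^{m}(-1)^{p}\dim H^{p}(M;\R)=\chi(M).
\]
Since the Riemannian metric is pulled back from $M$, the lifted operator $\widetilde{D}^{+}$ on $\widetilde{M}$ is $\Gamma$-equivariant and uniformly elliptic, and Atiyah's theorem (see \cite{Lue2002}) gives
\[
\chi(M)=\mathrm{ind}(D^{+})=\mathrm{ind}_{\Gamma}(\widetilde{D}^{+})=\dim_{\mathcal{N}(\Gamma)}\ker \widetilde{D}^{+}-\dim_{\mathcal{N}(\Gamma)}\ker (\widetilde{D}^{+})^{*},
\]
where $\dim_{\mathcal{N}(\Gamma)}$ denotes the von Neumann dimension with respect to the group von Neumann algebra of $\Gamma$.

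Next, I would invoke the $L^{2}$-Hodge decomposition on $\widetilde{M}$: the kernel of $\widetilde{D}^{+}$ acting on $L^{2}$ even-degree forms splits as the orthogonal direct sum of the spaces of $L^{2}$-harmonic $p$-forms for even $p$, and similarly the kernel of $(\widetilde{D}^{+})^{*}$ splits into the $L^{2}$-harmonic $p$-forms for odd $p$. By definition, the von Neumann dimension of the space of $L^{2}$-harmonic $p$-forms on $\widetilde{M}$ is the $p$-th $L^{2}$-Betti number $b_{p}^{(2)}(\widetilde{M})$. Combining this with the previous display yields
\[
\chi(M)=\sum_{p\ \mathrm{even}}b_{p}^{(2)}(\widetilde{M})-\sum_{p\ \mathrm{odd}}b_{p}^{(2)}(\widetilde{M})=\sum_{p=0}^{m}(-1)^{p}b_{p}^{(2)}(\widetilde{M}).
\]

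The only real technical point, and thus the principal obstacle, is the verification of Atiyah's $L^{2}$-index theorem and the $L^{2}$-Hodge decomposition on the noncompact cover $\widetilde{M}$; these require trace-class estimates on the $\Gamma$-equivariant heat kernel and careful use of the group trace on $\mathcal{N}(\Gamma)$. However, both are classical and are precisely the content of the references for $L^{2}$-invariants cited in the paper, so once those are quoted the proof reduces to the two-line identification above. The statement can therefore be taken directly from \cite{Lue2002}.
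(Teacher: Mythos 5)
Your argument is correct, but it takes the analytic route, whereas the paper simply quotes this as a standard fact from \cite{Lue2002}, where the underlying proof is purely combinatorial: for a finite CW-complex $M$ one forms the $\mathcal{N}(\Gamma)$-chain complex $\mathcal{N}(\Gamma)\otimes_{\mathbb{Z}\Gamma}C_*(\widetilde{M})$, notes that the von Neumann dimension of the $p$-th chain module is the number of $p$-cells, and uses additivity of the von Neumann dimension to conclude $\chi(M)=\sum_p(-1)^p\dim_{\mathcal{N}(\Gamma)}C_p^{(2)}(\widetilde{M})=\sum_p(-1)^p b_p^{(2)}(\widetilde{M})$. Your route instead runs the rolled-up de Rham operator through Atiyah's $L^2$-index theorem and the $L^2$-Hodge decomposition; this is a classical and valid proof, but be aware of two points. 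First, it needs $M$ to be a closed smooth Riemannian manifold, while the combinatorial proof works for any finite CW-complex --- which matters in this paper, since the same lemma is invoked (via the good-cover/nerve remark) for compact Alexandrov spaces, where no smooth structure is available. Second, your identification ``von Neumann dimension of $L^2$-harmonic $p$-forms $=b_p^{(2)}(\widetilde{M})$'' is the analytic definition; its agreement with the cellular $L^2$-Betti numbers used in \autoref{lem2} (L\"uck approximation) is Dodziuk's $L^2$-Hodge--de Rham theorem, a nontrivial input that should be cited rather than treated as a definition. With those caveats your argument is sound; the combinatorial quotation is simply shorter and more general, which is why the paper leaves the lemma to \cite{Lue2002}.
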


\begin{proof}[Proof of \autoref{qr}]
By passing to a finite cover, we can assume that $\pi_1 (M)$ is nilpotent. By \autoref{re}, there is a sequence of subgroups $G_j$ of $\pi_1(M)$  with finite index such that $\lim_{j \rightarrow +\infty} [G_1: G_j]  =\infty$.
Since the Betti numbers $b_p(\widetilde{M} / G_j)$ is bounded above by some constant depending only on $\dim M$, then we have
\[
\lim_{j \rightarrow +\infty} \frac{b_p(\widetilde{M} / G_j)}{[G_1: G_j]} =0.
\]
By \autoref{lem2}, we see that the $p$-th $L^2$ Betti number of $\widetilde{M}$ vanishes.
\end{proof}

\begin{rem}
The above proof also works for Alexandrov spaces. In fact by \cite{MY2019}, any compact Alexandrov space $X$ admits a good cover, it follows that $X$ is homotopic to the nerve of a good cover of $X$. Then L\"{u}ck's \autoref{lem2} still applies.
\end{rem}

\begin{rem}
It follows from \autoref{L2qr} and \autoref{lem1} that the Euler number of $M$ vanishes. Therefore this provides a second proof of \autoref{qr} without using fibration theorem for almost nonnegatively curved manifolds.
\end{rem}

In general, it is not true that the Euler number of a compact manifold with almost nonnegative Ricci curvature and infinite fundamental group is zero. See the example in \cite{And1992}. However, our method can also be used to prove a vanishing theorem of Euler number under an additional assumption on curvature operators.
\begin{thm} \label{euler}
Let $M$ be a compact manifold with infinite fundamental group. If for some constant $C>0$, there exists a sequence of Riemannian metrics $(g_i)_{i\in \mathbb N}$ on $M$ with almost nonnegative Ricci curvature such that the curvature operator of $g_i$ is bounded below by $-C \Id$, then the Euler number of $M$ vanishes.
\end{thm}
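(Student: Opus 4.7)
The plan is to follow the same template that proved Theorems \ref{nice}, \ref{spin} and \ref{string}, but applied to the Gauss-Bonnet operator $d+d^{*}\colon\oplus_k \wedge^{2k}(M)\to \oplus_k \wedge^{2k+1}(M)$, whose index is the Euler number. First, by \cite{KW2011} the fundamental group of $M$ has a finite-index nilpotent subgroup, so we may replace $M$ by a finite cover $M_1$ with $\pi_1(M_1)$ nilpotent. By \autoref{re} (and since $\pi_1(M)$ is infinite), there is a tower of normal finite-index subgroups
\[
\pi_1(M_1)=G_1 \supset G_2 \supset \cdots \supset G_j\supset \cdots,\qquad \lim_{j\to\infty}[G_1:G_j]=\infty,
\]
with corresponding finite Riemannian covers $(M_j,\widehat{g_i}^j)$ whose diameters satisfy Ivanov's inequality (\autoref{iv}).

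The key input replacing the Kähler/spin Bochner formula is the classical Gallot--Meyer Weitzenböck formula for the Hodge Laplacian on $p$-forms: if $\omega$ is a harmonic $p$-form, then
\[
\tfrac{1}{2}\Delta|\omega|^2=|\nabla\omega|^2+\langle \mathcal{F}_p\omega,\omega\rangle,
\]
and $\mathcal{F}_p$ is bounded below by a constant multiple of the lowest eigenvalue of the curvature operator. Under the hypothesis that the curvature operator of $g_i$ is bounded below by $-C\,\Id$, one obtains a pointwise bound $\langle \mathcal{F}_p\omega,\omega\rangle\ge -C'(m)|\omega|^2$ with $C'(m)$ depending only on $m=\dim M$ (this is what fails for Ricci alone, and why the extra assumption is imposed). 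Applying Kato's inequality then yields $|\omega|\Delta|\omega|\ge -C'(m)|\omega|^2$ in the distributional sense, with $h\equiv C'(m)$ constant, so the integral curvature quantity $\Lambda$ appearing in \autoref{sb1} is automatically bounded independently of $i,j$.

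Now fix $j$ and let $i\to\infty$. By \autoref{iv},
\[
\diam(\widehat{g_i}^j)\le [G_0:G_1]\,[G_1:G_j]/i\to 0,
\]
while $\Ric(\widehat{g_i}^j)\ge -1$. As in the proof of \autoref{bound}, combining \autoref{peter} applied to the space of harmonic $p$-forms on $(M_j,\widehat{g_i}^j)$ with \autoref{sb1} gives
\[
b_p(M_j)=\dim\ker\bigl(\Delta_{\widehat{g_i}^j}|_{\wedge^p}\bigr)\le C(m,i,j),
\]
and because $b_p(M_j)$ is a topological invariant (independent of $i$) while $C(m,i,j)$ stays bounded as $i\to\infty$ (since the diameter shrinks and the explicit constant \eqref{ex} depends only on $m$, $p$, $R$, $\Lambda$), we conclude that for each $p$,
\[
b_p(M_j)\le C(m),
\]
with $C(m)$ independent of $j$. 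Summing over $p$ yields $|\chi(M_j)|\le C(m)$ uniformly in $j$.

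Finally, we invoke the multiplicativity of the Euler number under finite coverings: $\chi(M_j)=[G_1:G_j]\,\chi(M_1)$. Since $[G_1:G_j]\to\infty$ while $|\chi(M_j)|$ stays bounded, we must have $\chi(M_1)=0$, and therefore $\chi(M)=0$ by multiplicativity under the cover $M_1\to M$. The only real obstacle in this plan is verifying that the lower bound on the curvature operator transfers to a dimension-only lower bound on each Weitzenböck term $\mathcal{F}_p$ for all $p$; this is the Gallot--Meyer estimate and is precisely why the curvature operator hypothesis (rather than mere Ricci) is needed to treat every $p$ simultaneously.
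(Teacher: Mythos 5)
Your proposal is correct and follows essentially the same route as the paper: the paper's (sketched) proof also replaces the Dolbeault operator by $d+d^{*}$, uses the Weitzenb\"ock formula on harmonic forms with the curvature-operator lower bound to get a curvature term bounded below by a constant depending only on $C$ and $\dim M$, and then runs the identical tower-of-covers argument (\autoref{re}, \autoref{iv}, \autoref{peter}, \autoref{sb1}, and multiplicativity of the Euler number under finite coverings). The only cosmetic remark is that your constant $C'(m)$ in the Gallot--Meyer bound also depends on the fixed constant $C$, which is harmless since $C$ is independent of $i$ and $j$.
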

The proof of \autoref{euler} is similar to that of \autoref{nice}. Here we look at the Dirac operator $d + d^*$. The kernel of $d + d^*$ is the space of harmonic forms. As the curvature operator of $g_i$ is bounded below by $-C\Id$,  then the curvature term of the Bochner formula applied to harmonic forms is bounded from below by some negative constant depending only on $C$ and $\dim M$. The rest part of proof is identical to the proof of \autoref{nice}.

\appendix
\section{Some basic facts on elliptic genus and  Witten genus }\label{EWgenus}

In this section, we recap some basic facts about elliptic genus and Witten genus. Let $M$ be a $4m$-dimensional compact oriented smooth manifold and $\{\pm 2\pi \sqrt{-1}x_{j},1\leq j\leq 2m\}$ denote the formal Chern roots of $T_{%
\mathbb{C}}M $, the complexification of the tangent vector bundle $TM$ of $M$.

Let
\[
\widehat A(M)=\prod_{j=1}^{2m}\frac{\pi\sqrt{-1}x_j}{\sinh(\pi\sqrt{-1}x_j)}, \ \ \ \widehat L(M)=\prod_{j=1}^{2m}\frac{2\pi\sqrt{-1}x_j}{\tanh(\pi\sqrt{-1}x_j)}
\]
be the $\widehat A$-class and $\widehat L$-class of $M$ respectively.

The elliptic genera $Ell_1(M), Ell_2(M)$ and Witten genus $W(M)$ are defined
by (c.f. \cite{Liu95cmp})
\begin{equation}
Ell_1(M)=\left\langle \left( \prod_{j=1}^{2m}x_{j}\frac{\theta ^{\prime }(0,\tau
)}{\theta (x_{j},\tau )}\frac{\theta_1(x_j,\tau
)}{\theta_1(0,\tau )}\right)\!, [M]\right\rangle=\widehat{L}(M)+O(q),
\end{equation}
\begin{equation}
Ell_2(M)=\left\langle \left( \prod_{j=1}^{2m}x_{j}\frac{\theta ^{\prime }(0,\tau
)}{\theta (x_{j},\tau )}\frac{\theta_2(x_j,\tau
)}{\theta_2(0,\tau )}\right)\!, [M]\right\rangle=\widehat{A}(M)+O(q^{1/2}),
\end{equation}
\begin{equation}
W(M)=\left\langle \left( \prod_{j=1}^{2m}x_{j}\frac{\theta ^{\prime }(0,\tau
)}{\theta (x_{j},\tau )}\right)\!, [M]\right\rangle=\widehat{A}(M)+O(q^{1/2}),
\end{equation}
where $\theta, \theta_1, \theta_2$ are the classical Jacobi theta functions:
\be \theta(v,\tau)=2q^{1/8}\sin(\pi v)\prod_{j=1}^\infty[(1-q^j)(1-e^{2\pi \sqrt{-1}v}q^j)(1-e^{-2\pi
\sqrt{-1}v}q^j)], \ee
\be \theta_1(v,\tau)=2q^{1/8}\cos(\pi v)\prod_{j=1}^\infty[(1-q^j)(1+e^{2\pi \sqrt{-1}v}q^j)(1+e^{-2\pi
\sqrt{-1}v}q^j)], \ee
 \be \theta_2(v,\tau)=\prod_{j=1}^\infty[(1-q^j)(1-e^{2\pi \sqrt{-1}v}q^{j-1/2})(1-e^{-2\pi
\sqrt{-1}v}q^{j-1/2})], \ee
$q=e^{2\pi \sqrt{-1}\tau}, \tau\in \mathbb{H}$.

Let $E$ be a complex vector bundle. For any complex number $t$, let
\[
\Lambda_t(E)={\mathbb C}|_M+tE+t^2\Lambda^2(E)+\cdots ,\quad S_t(E)={\mathbb C}|_M+tE+t^2S^2(E)+\cdots
\]
denote respectively the total exterior and symmetric powers  of $E$, which live in
$K(M)[[t]]$. Denote $\widetilde{E}=E-\CC^{\mathrm{rk} E}$ in $K(M)$. Elliptic genus and Witten genus can be expressed as (c.f. \cite{CHZ11, Liu1995})
\begin{equation}
Ell_1(M)=\left\langle \widehat{L}(TM)\mathrm{ch}\left( \Theta \left( T_{\mathbb{C%
}}M\right)\otimes  \Theta_1 \left( T_{\mathbb{C%
}}M\right) \right) ,[M]\right\rangle ,
\end{equation}%

\begin{equation}
Ell_2(M)=\left\langle \widehat{A}(TM)\mathrm{ch}\left( \Theta \left( T_{\mathbb{C%
}}M\right)\otimes  \Theta_2 \left( T_{\mathbb{C%
}}M\right) \right) ,[M]\right\rangle ,
\end{equation}%

\begin{equation}
W(M)=\left\langle \widehat{A}(TM)\mathrm{ch}\left( \Theta \left( T_{\mathbb{C%
}}M\right) \right) ,[M]\right\rangle ,
\end{equation}%
where
\begin{align} \label{Wibundles}
\begin{split}
\Theta (T_{\mathbb{C}}M)&=\overset{\infty }{\underset{m=1}{\otimes }}S_{q^{m}}(\widetilde{T_{\mathbb{C}}M}),\\
\Theta_1(T_{\mathbb{C}}M)&=\bigotimes_{m=1}^\infty\Lambda_{q^m}(\widetilde{T_\CC M}), \\
\Theta_2(T_{\mathbb{C}}M)&=\bigotimes_{m=1}^\infty \Lambda_{-q^{m-{1\over 2}}}(\widetilde{T_{\mathbb C}M})
\end{split}
\end{align}
are the Witten bundle introduced in \cite{W86}.

By the Atiyah-Singer index theorem, when the manifold $M$ is spin,
the elliptic genera can be expressed analytically as index of the twisted Dirac operators
\[
Ell_1(M)= \ind(d_s\otimes(\Theta \left( T_{\mathbb{C%
}}M\right)\otimes\Theta_1 \left( T_{\mathbb{C%
}}M\right))\in \Z[[q]],
\]
\[
Ell_2(M)= \ind(D\otimes (\Theta \left( T_{\mathbb{C%
}}M\right)\otimes\Theta_2 \left( T_{\mathbb{C%
}}M\right))\in \Z[[q^{1/2}]],
\]
and further when $M$ is spin, Witten genus can be expressed analytically as index of the twisted Dirac operators,
\[
W(M)=\ind(D\otimes \Theta \left( T_{\mathbb{C%
}}M\right))\in \mathbb{Z}[[q]],
\]
where $d_s$ is the signature operator and $D$ is the Atiyah-Singer spin Dirac operator on $M$ (cf. \cite{HBJ1992}).

Physically, they are partition functions of two-dimensional quantum field theories \cite{W86} and formally the indices of (twisted) Dirac operator in free loop spaces \cite{W87}.

The purely topological definition of elliptic genera were given by Ochanine \cite{Och} and Landweber-Stong \cite{LS}  The theory of elliptic genera and Witten genus lead to the profound theory of elliptic cohomology \cite{Hop} in algebraic topology.

Elliptic genera and Witten genus have the following modularity properties \cite{HBJ1992, Liu1995, Za}.
\begin{lem} \label{mod1} (i) $Ell_1(M)$ and $Ell_2(M)$ are level 2 modular forms of weight $2m$ and they are modularly related as
\be Ell_1(M, -1/\tau)={(2\tau)}^{2m}Ell_2(M, \tau). \ee
(ii) $W(M)$ is a modular form of weight $2m$, when $M$ is string (i.e. spin and the spin class $\frac{p_1(M)}{2}=0$).
\end{lem}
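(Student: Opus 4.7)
The plan is to verify both assertions by direct substitution into the characteristic-class expressions, using only the transformation laws of the Jacobi theta functions under the generators $T:\tau\mapsto\tau+1$ and $S:\tau\mapsto -1/\tau$ of $SL(2,\Z)$, together with the degree-counting that is available because the integrand is evaluated against the fundamental class of a $4m$-manifold.

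For part (i), I would first record the $S$-transformation formulas
\[
\theta(v/\tau,-1/\tau)=\sqrt{\tau/i}\,e^{\pi i v^{2}/\tau}\theta(v,\tau),
\]
together with the analogous identities in which $\theta_{1}$ and $\theta_{2}$ are swapped under $S$, and $\theta_{i}$ is sent to itself (up to an explicit automorphy factor) under $T$. Substituting $v=x_{j}/\tau$ and $\tau\mapsto-1/\tau$ into the product defining $Ell_{1}(M,-1/\tau)$, the square-root factors combine to produce the weight factor $(2\tau)^{2m}$ (after pairing the $2m$ pairs of Chern roots and absorbing $\theta'(0,\tau)$), while the Gaussian factors $e^{\pi i v^{2}/\tau}$ in numerator and denominator cancel \emph{pair by pair} because they appear in both the $\theta(v,\tau)$ and $\theta_{1}(v,\tau)$ contributions with matching exponent. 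The swap $\theta_{1}\leftrightarrow\theta_{2}$ then converts the $Ell_{1}$ integrand into the $Ell_{2}$ integrand, giving the claimed identity $Ell_{1}(M,-1/\tau)=(2\tau)^{2m}Ell_{2}(M,\tau)$. Invariance under $T$ (up to the appropriate character, hence membership in a level-$2$ modular forms ring) follows from the elementary $q$-periodicity of each factor, and the weight $2m$ is read off from the scaling $x_{j}\mapsto x_{j}/\tau$ applied to $2m$ Chern roots.

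For part (ii), the Witten-genus integrand lacks the compensating $\theta_{1}$ or $\theta_{2}$ factor, so the Gaussian terms $e^{\pi i x_{j}^{2}/\tau}$ do \emph{not} cancel pair-by-pair; instead they assemble into the global factor $\exp\!\bigl(\pi i \tau^{-1}\sum_{j} x_{j}^{2}\bigr)$. Because the Chern roots come in $\pm$ pairs $\{\pm 2\pi\sqrt{-1}x_{j}\}$, the symmetric function $\sum x_{j}^{2}$ is (up to normalisation) precisely the first Pontryagin class $p_{1}(M)$. Under the string hypothesis $p_{1}(M)/2=0$, this anomalous exponential contributes $1$ in cohomology and the $S$-transformation of $W(M)$ produces exactly the weight factor $\tau^{2m}$; invariance under $T$ is again immediate from $q$-periodicity, so $W(M)$ is modular of weight $2m$ on the full modular group.

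The main obstacle is the careful tracking of the Gaussian anomaly: one must rescale $x_{j}$ consistently, identify $\sum x_{j}^{2}$ as a Pontryagin class with the correct sign and normalisation, and use degree considerations (the integrand is paired with $[M]$ of real dimension $4m$) to discard higher powers that would contribute beyond top degree. Once this accounting is done cleanly, both statements reduce to the pair/swap behaviour of $\theta_{1},\theta_{2}$ under $S$, which is standard and is worked out in detail in \cite{HBJ1992, Liu1995, Za}.
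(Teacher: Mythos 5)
Your argument is correct and is exactly the standard modular-invariance computation (theta transformation laws under $S$ and $T$, the $\theta_1\leftrightarrow\theta_2$ swap, cancellation of the Gaussian anomaly for the elliptic genera, and the identification of the surviving anomaly with $p_1$ killed by the string condition) from \cite{HBJ1992, Liu1995, Za}, which is precisely what the paper relies on: \autoref{mod1} is stated with those citations and no independent proof. The only point to tighten is that level-$2$ modularity requires invariance under generators of $\Gamma_0(2)$ (resp.\ $\Gamma^0(2)$), e.g.\ $T$ and $ST^{-2}S^{-1}$, which does follow by combining your $S$-relation with the $q$- (resp.\ $q^{1/2}$-) periodicity of both $Ell_1$ and $Ell_2$, as you indicate.
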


Let
\be \Theta_2 \left( T_{\mathbb{C%
}}M\right)=B_0(T_\CC M)+B_1(T_\CC
M)q^{1\over2}+\cdots, \ee where the $B_i$'s,
are elements in the semi-group formally generated by Hermitian
vector bundles over $M$. Then
\be  Ell_2(M)=\ind (D\otimes B_0(T_\CC M))+\ind(D\otimes B_1(T_\CC
M))q^{1/2}+\cdots. \ee

\begin{lem} \label{mo}
$Ell_2(M)$ is determined by $ \ind (D\otimes B_k(T_\CC M)), 0 \leq k \leq \frac{m}{2}$, where $B_k(T_\CC M)$ involves linear combinations of tensor product of $TM$
at most to power ${[\frac{m}{2}]}$.
\end{lem}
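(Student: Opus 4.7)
The plan is to deduce Lemma \ref{mo} from the modularity stated in Lemma \ref{mod1}(i) together with the finite-dimensionality of spaces of modular forms of fixed weight on a congruence subgroup. By Lemma \ref{mod1}(i), $Ell_2(M)$ is a level $2$ modular form of weight $2m$. The ring of modular forms on the relevant index-$2$ congruence subgroup of $SL_2(\Z)$ is classically a polynomial ring in two explicit generators of weights $2$ and $4$ (built out of theta-constants); a monomial count then shows that the weight $2m$ graded piece has dimension exactly $\lfloor m/2\rfloor + 1$.

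Consequently, $Ell_2(M)$ is uniquely determined by its first $\lfloor m/2\rfloor + 1$ Fourier coefficients in the $q^{1/2}$-expansion. Reading these coefficients off from the index-theoretic expression for $Ell_2$ given before Lemma \ref{mo}, namely
\[
Ell_2(M) \;=\; \ind\bigl(D\otimes \Theta(T_\CC M)\otimes \Theta_2(T_\CC M)\bigr) \;=\; \sum_{k\geq 0}\ind\bigl(D\otimes B_k(T_\CC M)\bigr)\,q^{k/2},
\]
we conclude that $Ell_2(M)$ is completely determined by the indices $\ind(D\otimes B_k(T_\CC M))$ for $0\leq k\leq m/2$.

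For the assertion about tensor powers, I would expand $\Theta(T_\CC M)\otimes \Theta_2(T_\CC M)$ directly from the definitions
\[
\Theta(T_\CC M) = \bigotimes_{n\geq 1} S_{q^{n}}\bigl(\widetilde{T_\CC M}\bigr),\qquad \Theta_2(T_\CC M) = \bigotimes_{n\geq 1}\Lambda_{-q^{n-1/2}}\bigl(\widetilde{T_\CC M}\bigr).
\]
A contribution to the coefficient of $q^{k/2}$ in the product comes from a choice of symmetric/exterior powers indexed by a tuple $(j_n)$ whose weighted sum $\sum_n j_n\,n + \sum_n j_n'(n-\tfrac12)$ equals $k/2$; the corresponding total tensor degree in $\widetilde{T_\CC M}$ is $\sum_n (j_n + j_n') \leq k$. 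Thus for $k \leq \lfloor m/2\rfloor$ the tensor degree is bounded by $\lfloor m/2\rfloor$, which is the asserted bound on $B_k(T_\CC M)$.

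The main obstacle is pinning down the dimension of the space of modular forms: one must be careful to identify precisely which congruence subgroup of $SL_2(\Z)$ acts on $Ell_2$ under the normalization used in Lemma \ref{mod1}(i), and then invoke the standard explicit description of its graded ring of modular forms to count monomials of weight $2m$. Everything else is either an immediate application of Lemma \ref{mod1}(i) or a bookkeeping exercise with the Witten-bundle expansions in \eqref{Wibundles}.
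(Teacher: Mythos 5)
Your route is the standard one and is essentially what the paper relies on: its proof of Lemma~\ref{mo} is just a citation of \cite{Liu1995}, where precisely this modular-forms argument is carried out, and it parallels the paper's own proof of Lemma~\ref{mod0} (there with $E_4,E_6$ for the full modular group, since the Witten genus of a string manifold is modular over $SL(2,\Z)$). Two small repairs are needed in your write-up. First, the relevant group for $Ell_2$ (it is $\Gamma^0(2)$, consistent with the expansion in $q^{1/2}$) has index $3$ in $SL(2,\Z)$, not $2$; its ring of modular forms is indeed a polynomial ring $\mathbb{C}[\delta_2,\varepsilon_2]$ on generators of weights $2$ and $4$, so the weight-$2m$ piece has dimension $\lfloor m/2\rfloor+1$ as you claim. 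Second, ``the space has dimension $d$'' does not by itself give ``a form is determined by its first $d$ Fourier coefficients''; you need injectivity of the truncation map. Here it follows from the cusp behavior of the generators: $\delta_2$ is nonvanishing at the cusp while $\varepsilon_2$ vanishes to exact order one in the variable $q^{1/2}$, so writing $Ell_2(M)=\sum_{j=0}^{\lfloor m/2\rfloor} a_j\,\delta_2^{\,m-2j}\varepsilon_2^{\,j}$, the coefficient of $q^{j/2}$ determines $a_j$ recursively (an upper-triangular system with nonzero diagonal), which is exactly why the indices $\ind(D\otimes B_k(T_\CC M))$, $0\leq k\leq m/2$, suffice. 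With these two points fixed, your argument is complete; the bookkeeping bounding the tensor degree of $B_k(T_\CC M)$ by $k\leq\lfloor m/2\rfloor$ from the expansions in \eqref{Wibundles} is correct.
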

\begin{proof} See \cite{Liu1995}.
\end{proof}

Let
\be \Theta \left( T_{\mathbb{C%
}}M\right)=W_0(T_\CC M)+W_1(T_\CC
M)q+\cdots, \ee where the $W_i$'s,
are elements in the semi-group formally generated by Hermitian
vector bundles over $M$. Then
\be  W(M)=\ind (D\otimes W_0(T_\CC M))+\ind(D\otimes W_1(T_\CC
M))q+\cdots. \ee
\begin{lem} \label{mod0}
Let $(M, g)$ be string $4m$ dimensional Riemannian manifold, then
$W(M)$ is determined by finite many (depending only on $m$) $\ind(D\otimes W_j(T_\CC M))$.
\end{lem}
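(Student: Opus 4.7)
The plan is to exploit the modularity property of the Witten genus (\autoref{mod1} (ii)) together with the finite-dimensionality of the space of modular forms of a fixed weight over $SL(2,\Z)$. First I would expand the Witten bundle as in the statement preceding the lemma, writing
\[
W(M)=\sum_{j=0}^{\infty} \ind(D\otimes W_j(T_\CC M))\,q^j,
\]
so that the claim is equivalent to asserting that only the first $N=N(m)$ coefficients of this $q$-expansion carry information, for some $N$ depending only on $m$.

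The key input is that when $M$ is string, \autoref{mod1} (ii) tells us $W(M)$ is a modular form of weight $2m$ for the full modular group $SL(2,\Z)$. The graded ring $\bigoplus_{k\ge 0} M_k(SL(2,\Z))$ is freely generated by the Eisenstein series $E_4$ and $E_6$, so the space $M_{2m}(SL(2,\Z))$ is finite-dimensional with dimension
\[
d(m)\;=\;\begin{cases}\lfloor m/6\rfloor & \text{if } 2m\equiv 2\pmod{12},\\ \lfloor m/6\rfloor+1 & \text{otherwise,}\end{cases}
\]
a quantity that depends only on $m$.

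Next I would invoke the standard fact (a consequence of the valence formula, or equivalently of the Sturm bound for level one) that a modular form of weight $2m$ for $SL(2,\Z)$ is uniquely determined by the initial segment $a_0, a_1,\dots, a_{d(m)-1}$ of its $q$-expansion: if the first $d(m)$ Fourier coefficients vanish, then the form itself vanishes. Applying this to $W(M)$, we conclude that the coefficients $\ind(D\otimes W_j(T_\CC M))$ for $j\ge d(m)$ are determined by those for $0\le j\le d(m)-1$. Choosing a basis $f_1,\dots,f_{d(m)}$ of $M_{2m}(SL(2,\Z))$ and expressing $W(M)=\sum_\ell c_\ell f_\ell$, the coefficients $c_\ell$ are recovered by solving a fixed $d(m)\times d(m)$ linear system from the initial indices, which in turn recovers the whole series.

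The substantive content is entirely packaged into \autoref{mod1} (ii); once modularity is in hand, the argument is formal. The only point requiring mild care is to verify that the number of twisted Dirac indices needed truly depends only on the dimension of $M$ and not on $M$ itself, but this is immediate because the bound $d(m)$ is intrinsic to the weight $2m$ and to the ring structure of modular forms for $SL(2,\Z)$.
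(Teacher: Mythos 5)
Your proposal is correct and takes essentially the same route as the paper: both rest on the modularity statement \autoref{mod1}(ii) together with the structure of weight-$2m$ modular forms for $SL(2,\Z)$, so that finitely many initial $q$-coefficients (the twisted indices $\ind(D\otimes W_j(T_\CC M))$) determine $W(M)$, with the bound depending only on $m$. The paper implements this by writing $W(M)=\sum_{4i+6j=2m}a_{ij}E_4(\tau)^iE_6(\tau)^j$ and solving for the $a_{ij}$ by comparing the first $q$-coefficients, which is the same finite linear-algebra step you carry out with an abstract basis of $M_{2m}(SL(2,\Z))$ and the valence/Sturm bound.
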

\begin{proof} See \cite{HBJ1992}. Actually let
\be
\begin{split}
&E_{4}(\tau)=1+240(q+9q^{2}+28q^{3}+\cdots),\\
&E_{6}(\tau)=1-504(q+33q^{2}+244q^{3}+\cdots)\\
\end{split}
\ee
be the Eisenstein series. Their weights are $4, 6$ respectively, algebraically independent and freely generate $\mathcal{M}_\R(SL(2, \Z)).$

Let
\be
\Theta \left( T_{\mathbb{C}}M\right)=W_0(T_\CC M)+W_1(T_\CC M)q+\cdots,
\ee
where the $W_i$'s, are elements in the semi-group formally generated by Hermitian vector bundles over $M$. Then
\be
W(M)=\ind (D\otimes W_0(T_\CC M))+\ind(D\otimes W_1(T_\CC M))q+\cdots.
\ee
Comparing $q$-coefficients, we see
\[
W(M)=\sum_{4i+6j=2m, \ i, j\geq 0} a_{ij}E_{4}(\tau)^iE_{6}(\tau)^j,
\]
where $a_{i j}$ is determined by finite many (depending only on $m$) $\ind(D\otimes W_j(T_\CC M))$.
\end{proof}

One of most striking properties about elliptic genus is rigidity. Let $M$ be a closed smooth manifold and $P$ be a Fredholm operator on $M$. We assume that a compact connected Lie group $G$ acts on $M$ nontrivially and that $P$ is $G$-equivariant, by which we mean it commutes with the $G$ action. Then the kernel and cokernel of $P$ are finite dimensional representations of $G$. The equivariant index of $P$ is the virtual character of $G$ defined by
\begin{equation}
\ind(h, P)=\tr\big[h\big|_{\ker P}\big]-\tr\big[h\big|_{\cok P}\big],\quad h\in G.
\end{equation}
$P$ is said to be \emph{rigid} for this $G$ action if $\ind(h,P)$ does not
depend on $h\in G$.
Motivated by physics, Witten conjectured that the operators $d_s\otimes(\Theta \left( T_{\mathbb{C%
}}M\right)\otimes\Theta_1 \left( T_{\mathbb{C%
}}M\right)$ and   $D\otimes (\Theta \left( T_{\mathbb{C%
}}M\right)\otimes\Theta_2 \left( T_{\mathbb{C%
}}M\right)$ should be rigid. The Witten conjecture was first proved by Taubes \cite{Tau1989} and Bott-Taubes \cite{BT1989}.
In \cite{Liu1995, Liu1996}, using the modular invariance property, Liu presented a simple and unified proof as well as various generalizations of the Witten conjecture.
\par

A celebrated result asserts that a string manifold with a nontrivial $S^3$-action has vanishing Witten genus, see \cite{Liu1995} and \cite{Des1994}.
On the other hand, concerning the application to positive Ricci curvature, Stolz conjectures that if $M$ is a smooth closed string manifold of dimension $4m$
admitting a Riemannian metric with positive Ricci curvature, then the Witten genus $W(M)$ vanishes \cite{Sto1996}. So far the conjecture is still open.

\bibliographystyle{alpha}
\bibliography{CGHref}

\begin{thebibliography}{KPT10}

\bibitem[And92]{And1992}
Michael~T. Anderson.
\newblock Hausdorff perturbations of {R}icci-flat manifolds and the splitting
  theorem.
\newblock {\em Duke Math. J.}, 68(1):67--82, 1992.

\bibitem[Bal06]{Bal2006}
Werner Ballmann.
\newblock {\em Lectures on {K}\"{a}hler manifolds}.
\newblock ESI Lectures in Mathematics and Physics. European Mathematical
  Society (EMS), Z\"{u}rich, 2006.

\bibitem[BBI01]{BBI2001}
Dmitri Burago, Yuri Burago, and Sergei Ivanov.
\newblock {\em A course in metric geometry}, volume~33 of {\em Graduate Studies
  in Mathematics}.
\newblock American Mathematical Society, Providence, RI, 2001.

\bibitem[B{\'{e}}r88]{Ber1988}
Pierre~H. B{\'{e}}rard.
\newblock From vanishing theorems to estimating theorems: the {B}ochner
  technique revisited.
\newblock {\em Bull. Amer. Math. Soc. (N.S.)}, 19(2):371--406, 1988.

\bibitem[BGP92]{BGP1992}
Yu. Burago, M.~Gromov, and G.~Perelman.
\newblock A. {D}. {A}leksandrov spaces with curvatures bounded below.
\newblock {\em Uspekhi Mat. Nauk}, 47(2(284)):3--51, 222, 1992.

\bibitem[BLO22]{BLO}
Giovanni Bazzoni, Gregory Lupton, and John Oprea.
\newblock Homotopy invariants and almost non-negative curvature.
\newblock {\em Math. Z.}, 300(2):1117--1140, 2022.

\bibitem[BT89]{BT1989}
Raoul Bott and Clifford Taubes.
\newblock On the rigidity theorems of {W}itten.
\newblock {\em J. Amer. Math. Soc.}, 2(1):137--186, 1989.

\bibitem[CC97]{CC1997}
Jeff Cheeger and Tobias~H. Colding.
\newblock On the structure of spaces with {R}icci curvature bounded below. {I}.
\newblock {\em J. Differential Geom.}, 46(3):406--480, 1997.

\bibitem[CG72]{CG}
Jeff Cheeger and Detlef Gromoll.
\newblock The splitting theorem for manifolds of nonnegative {R}icci curvature.
\newblock {\em J. Differential Geometry}, 6:119--128, 1971/72.

\bibitem[CH20]{CH2021}
Xiaoyang Chen and Fei Han.
\newblock New {B}ochner type theorems, 2020.
\newblock 10.48550/ARXIV.2003.09897.

\bibitem[CHZ11]{CHZ11}
Qingtao Chen, Fei Han, and Weiping Zhang.
\newblock Generalized {W}itten genus and vanishing theorems.
\newblock {\em J. Differential Geom.}, 88(1):1--40, 2011.

\bibitem[Col97]{Col1997}
Tobias~H. Colding.
\newblock Ricci curvature and volume convergence.
\newblock {\em Ann. of Math. (2)}, 145(3):477--501, 1997.

\bibitem[Des94]{Des1994}
Anand Dessai.
\newblock The witten genus and $s^3$-actions on manifolds.
\newblock Preprint-Reihe des Fachbereichs Mathematik, Univ. Mainz, Nr. 6, 1994.

\bibitem[Fan02]{Fan2002}
Fuquan Fang.
\newblock K\"{a}hler manifolds with almost non-negative bisectional curvature.
\newblock {\em Asian J. Math.}, 6(3):385--398, 2002.

\bibitem[FY92]{FY1992}
Kenji Fukaya and Takao Yamaguchi.
\newblock The fundamental groups of almost non-negatively curved manifolds.
\newblock {\em Ann. of Math. (2)}, 136(2):253--333, 1992.

\bibitem[Gal83]{Gal1983}
Sylvestre Gallot.
\newblock In\'{e}galit\'{e}s isop\'{e}rim\'{e}triques, courbure de {R}icci et
  invariants g\'{e}om\'{e}triques. {II}.
\newblock {\em C. R. Acad. Sci. Paris S\'{e}r. I Math.}, 296(8):365--368, 1983.

\bibitem[Gro81]{Gro1981}
Michael Gromov.
\newblock Curvature, diameter and {B}etti numbers.
\newblock {\em Comment. Math. Helv.}, 56(2):179--195, 1981.

\bibitem[Hal50]{Ha}
Marshall Hall, Jr.
\newblock A topology for free groups and related groups.
\newblock {\em Ann. of Math. (2)}, 52:127--139, 1950.

\bibitem[HBJ92]{HBJ1992}
Friedrich Hirzebruch, Thomas Berger, and Rainer Jung.
\newblock {\em Manifolds and modular forms}.
\newblock Aspects of Mathematics, E20. Friedr. Vieweg \& Sohn, Braunschweig,
  1992.
\newblock With appendices by Nils-Peter Skoruppa and by Paul Baum.

\bibitem[Hir52]{H}
K.~A. Hirsch.
\newblock On infinite soluble groups. {IV}.
\newblock {\em J. London Math. Soc.}, 27:81--85, 1952.

\bibitem[Hir95]{Hir1995}
Friedrich Hirzebruch.
\newblock {\em Topological methods in algebraic geometry}.
\newblock Classics in Mathematics. Springer-Verlag, Berlin, 1995.

\bibitem[Hop02]{Hop}
M.~J. Hopkins.
\newblock Algebraic topology and modular forms.
\newblock In {\em Proceedings of the {I}nternational {C}ongress of
  {M}athematicians, {V}ol. {I} ({B}eijing, 2002)}, pages 291--317. Higher Ed.
  Press, Beijing, 2002.

\bibitem[Iva]{Iva2010}
Sergei Ivanov.
\newblock Diameter of m-fold cover.
\newblock MathOverflow.
\newblock URL:https://mathoverflow.net/q/16939 (version: 2010-03-03).

\bibitem[KL19]{KL2019}
Vitali Kapovitch and John Lott.
\newblock On noncollapsed almost {R}icci-flat 4-manifolds.
\newblock {\em Amer. J. Math.}, 141(3):737--755, 2019.

\bibitem[KPT10]{KPT2010}
Vitali Kapovitch, Anton Petrunin, and Wilderich Tuschmann.
\newblock Nilpotency, almost nonnegative curvature, and the gradient flow on
  {A}lexandrov spaces.
\newblock {\em Ann. of Math. (2)}, 171(1):343--373, 2010.

\bibitem[KW11]{KW2011}
V.~Kapovitch and B.~Wilking.
\newblock Structure of fundamental groups of manifolds with ricci curvature
  bounded below, 2011.

\bibitem[Liu95a]{Liu95cmp}
Kefeng Liu.
\newblock Modular invariance and characteristic numbers.
\newblock {\em Comm. Math. Phys.}, 174(1):29--42, 1995.

\bibitem[Liu95b]{Liu1995}
Kefeng Liu.
\newblock On modular invariance and rigidity theorems.
\newblock {\em J. Differential Geom.}, 41(2):343--396, 1995.

\bibitem[Liu96]{Liu1996}
Kefeng Liu.
\newblock On elliptic genera and theta-functions.
\newblock {\em Topology}, 35(3):617--640, 1996.

\bibitem[Lot00]{Lot2000}
John Lott.
\newblock {$\hat A$}-genus and collapsing.
\newblock {\em J. Geom. Anal.}, 10(3):529--543, 2000.

\bibitem[LS88]{LS}
Peter~S. Landweber and Robert~E. Stong.
\newblock Circle actions on {S}pin manifolds and characteristic numbers.
\newblock {\em Topology}, 27(2):145--161, 1988.

\bibitem[LS94]{LS1994}
Zhong-dong Liu and Zhong~Min Shen.
\newblock On the {B}etti numbers of {A}lexandrov spaces.
\newblock {\em Ann. Global Anal. Geom.}, 12(2):123--133, 1994.

\bibitem[L{\"{u}}c94]{Lue1994}
W.~L{\"{u}}ck.
\newblock Approximating {$L^2$}-invariants by their finite-dimensional
  analogues.
\newblock {\em Geom. Funct. Anal.}, 4(4):455--481, 1994.

\bibitem[L{\"{u}}c02]{Lue2002}
Wolfgang L{\"{u}}ck.
\newblock {\em {$L^2$}-invariants: theory and applications to geometry and
  {$K$}-theory}, volume~44 of {\em Ergebnisse der Mathematik und ihrer
  Grenzgebiete. 3. Folge. A Series of Modern Surveys in Mathematics}.
\newblock Springer-Verlag, Berlin, 2002.

\bibitem[MY19]{MY2019}
Ayato Mitsuishi and Takao Yamaguchi.
\newblock Good coverings of {A}lexandrov spaces.
\newblock {\em Trans. Amer. Math. Soc.}, 372(11):8107--8130, 2019.

\bibitem[Och87]{Och}
Serge Ochanine.
\newblock Sur les genres multiplicatifs d\'{e}finis par des int\'{e}grales
  elliptiques.
\newblock {\em Topology}, 26(2):143--151, 1987.

\bibitem[RS95]{RS1995}
Jonathan Rosenberg and Stephan Stolz.
\newblock A ``stable'' version of the {G}romov-{L}awson conjecture.
\newblock In {\em The \v{C}ech centennial ({B}oston, {MA}, 1993)}, volume 181
  of {\em Contemp. Math.}, pages 405--418. Amer. Math. Soc., Providence, RI,
  1995.

\bibitem[Sto96]{Sto1996}
Stephan Stolz.
\newblock A conjecture concerning positive {R}icci curvature and the {W}itten
  genus.
\newblock {\em Math. Ann.}, 304(4):785--800, 1996.

\bibitem[Tau89]{Tau1989}
Clifford~Henry Taubes.
\newblock {$S^1$} actions and elliptic genera.
\newblock {\em Comm. Math. Phys.}, 122(3):455--526, 1989.

\bibitem[Wit87]{W87}
Edward Witten.
\newblock Elliptic genera and quantum field theory.
\newblock {\em Comm. Math. Phys.}, 109(4):525--536, 1987.

\bibitem[Wit88]{W86}
Edward Witten.
\newblock The index of the {D}irac operator in loop space.
\newblock In {\em Elliptic curves and modular forms in algebraic topology
  ({P}rinceton, {NJ}, 1986)}, volume 1326 of {\em Lecture Notes in Math.},
  pages 161--181. Springer, Berlin, 1988.

\bibitem[Yam96]{Ya}
Takao Yamaguchi.
\newblock A convergence theorem in the geometry of {A}lexandrov spaces.
\newblock In {\em Actes de la {T}able {R}onde de {G}\'{e}om\'{e}trie
  {D}iff\'{e}rentielle ({L}uminy, 1992)}, volume~1 of {\em S\'{e}min. Congr.},
  pages 601--642. Soc. Math. France, Paris, 1996.

\bibitem[Zag88]{Za}
Don Zagier.
\newblock Note on the {L}andweber-{S}tong elliptic genus.
\newblock In {\em Elliptic curves and modular forms in algebraic topology
  ({P}rinceton, {NJ}, 1986)}, volume 1326 of {\em Lecture Notes in Math.},
  pages 216--224. Springer, Berlin, 1988.

\bibitem[Zha07]{Zha2007}
Yuguang Zhang.
\newblock K\"{a}hler manifolds with almost non-negative {R}icci curvature.
\newblock {\em Chinese Ann. Math. Ser. B}, 28(4):421--428, 2007.

\end{thebibliography}
\end{document}